\newcommand{\Kucera}{Ku$\mathrm{\check{c}}$era}
\newcommand{\cs}{\ensuremath{\mathrm{\Omega}}}
\newcommand{\integral}[2]{\int\!{#1}\,\mathrm{d}{#2}}
\newtheorem{lem}[theorem]{Lemma}
\begin{document}

\title{A constructive version of Birkhoff's ergodic theorem for Martin-L\"of random points}

\author{Laurent Bienvenu\inst{1}
\and Adam Day\inst{2}
\and Mathieu Hoyrup \inst{3}
\and  Ilya Mezhirov \inst{4}
\and Alexander Shen \inst{5}\thanks{Supported by ANR Sycomore, NAFIT ANR-08-EMER-008-01, RFBR~09-01-00709-a grants and Shapiro visitors program at Penn State University.}
}
\institute{LIAFA, CNRS \& Universit{\'e} de Paris 7, France, {\tt laurent.bienvenu@liafa.jussieu.fr}\\
\and
LORIA, INRIA Nancy, France,  {\tt mathieu.hoyrup@loria.fr}\\
\and 
Victoria University of Wellington, New Zealand, {\tt adam.day@msor.vuw.ac.nz}\\
\and
Technical University of Kaiserslautern, {\tt mezhirov@gmail.com} \\
\and
LIF, CNRS \& Universit{\'e} d'Aix-Marseille 1, France, {\tt alexander.shen@lif.univ-mrs.fr}
}

\maketitle

\begin{abstract}

We prove the effective version of Birkhoff's ergodic theorem for Martin-L\"of random points and effectively open sets, improving the results previously obtained in this direction (in particular those of V.~Vyugin, Nandakumar and Hoyrup, Rojas). The proof consists of two steps. First, we prove a generalization of \Kucera's theorem, which is a particular case of effective ergodic theorem: a trajectory of a computable ergodic mapping that starts from a random point cannot remain inside an effectively open set of measure less than~$1$. Second, we show that the full statement of the effective ergodic theorem can be reduced to this special case.  Both steps use the statement of classical ergodic theorem but not its usual classical proof. Therefore, we get a new simple proof of the effective ergodic theorem (with weaker assumptions than before). 
\medskip

This result was recently obtained independently by Franklin, Greenberg, Miller and Ng.

\end{abstract}

\section{Introduction}

The classical setting for the ergodic theorem is as follows. Let $X$ be a space with a probability measure $\mu$ on it, and let $T\colon X\to X$ be a measure-preserving transformation. Let $f$ be a real-valued integrable function on $X$. Birkhoff's ergodic theorem (see for example~\cite{Shiryaev1996}) says that the average value 
	$$
\frac{f(x)+f(T(x))+ f(T(T(x)))+\ldots+f(T^{(n-1)}(x))}{n}
	$$
has a limit (as $n\to\infty$) for all $x$ except for some null set, and this limit (the ``time-average'') equals the ``space average'' $\integral{f(x)}{\mu(x)}$ if the transformation $T$ is ergodic (i.e., has no non-trivial invariant subsets).

The classical example of an ergodic transformation is the left shift on Cantor space $\cs$ (the set of infinite binary sequences, also denoted by $2^\mathbb{N}$ or $2^\omega$):
        \[
\sigma\big(\omega(0)\omega(1)\omega(2)\ldots\big)=\omega(1)\omega(2)\omega(3)\ldots
        \]
The left shift preserves Lebesgue measure (a.k.a.\ uniform measure)~$\mu$ on $\cs$ and is ergodic. Therefore, the time and space averages coincide for almost every starting point~$\omega$. For a special case where $f$ is an indicator function of some (measurable) set $A$, we conclude that almost surely (for all $\omega$ outside some null set) the fraction of terms in the sequence
        \[
\omega, \sigma(\omega), \sigma(\sigma(\omega)),\ldots
        \]
that are inside $A$, converges to the measure of $A$.

\medskip
It is natural to ask whether Birkhoff ergodic theorem has an effective version for individual points saying that for a Martin-L\"of random starting point the time average coincides with the space average (under some effectivity assumptions for the space and the transformation). This question was posed by van Lambalgen~\cite{vanLambalgen1987} and answered by Vyugin \cite{Vyugin1997} who proved this statement for the case of computable function~$f$ (he also proved the convergence result for non-ergodic transformations). The result was later extended to larger classes of functions \cite{Nan08,HoyrupR2009b}. However, we cannot directly apply these results to an indicator function of an effectively open set (recall that an open set~$U$ is effectively open if there is a computably enumerable set~$S$ of finite strings such that~$U$ consists exactly of the infinite sequences having a prefix in~$S$). Indeed, the characteristic function of such a set is not computable (it is only lower semicomputable, i.e., it is the limit of a non-decreasing sequence of computable functions). So for effectively open sets (and lower semicomputable functions) the question remained open.\footnote{It was proved in \cite{HoyrupR2009b} that the result holds for any effectively open set whose measure is computable.}

In this paper we answer this question and show that effective ergodic theorem remains true for effectively open sets and lower semicomputable functions (Section~\ref{ergodic-generalization}). The proof goes in several steps.

First, in Section~\ref{sec:variations-kucera} we consider the following corollary of Birkhoff's ergodic theorem: if $A$ has positive measure, for almost every starting point at least one element of the trajectory belongs to $A$. Switching to complements: if $A$ has measure less than $1$, then (almost surely) some points in the trajectory are outside~$A$. An effective version of this statement (for effectively open sets of measure less than~$1$ and left shifts in Cantor space) was proved by \Kucera~\cite{Kucera1985}. We reproduce \Kucera's proof and prove several similar statements. (Most of them are consequences of the general results of Section~\ref{ergodic-generalization}, so the direct proofs are redundant, but they are nice and simple.)

Then in Section~\ref{ergodic-generalization} we consider the general effective ergodic theorem. In Section~\ref{ergodic-generalization-kucera} we prove a general version of \Kucera's theorem for computable ergodic transformations of Cantor space. Then (Section~\ref{ergodic-generalization-birkhoff}) we show how the effective version of ergodic theorem for effectively open sets and lower semicomputable functions can be reduced to classical Birkhoff's ergodic theorem and the general version of \Kucera's theorem proved in Section~\ref{ergodic-generalization-kucera}. Finally, we outline the generalization of these results to other probability spaces (Section~\ref{subsec:cps}).

In Section~\ref{sec:gen-lambalgen} we use the results of Section~\ref{sec:variations-kucera} to provide a generalized version of van Lambalgen's theorem (generalizing an earlier result of Miyabe). 

The results of Sections~\ref{sec:variations-kucera} and~\ref{sec:gen-lambalgen} were presented at the Computability in Europe conference (and published in its proceedings~\cite{BienvenuDMS10}). The improvement in this paper is Theorem~\ref{th:effective-birkhoff}, showing that one can go further and reduce the general effective version of Birkhoff's ergodic theorem for effectively open sets to this special case. This last result was obtained independently in~\cite{FranklinGMN}.

\section{Variations of \Kucera's theorem}\label{sec:variations-kucera}

In this section, we prove several variants of \Kucera's theorem. Let us first recall the original version proved in \cite{Kucera1985}. Let $\sigma$ be the left shift in Cantor space (i.e., an ergodic transformation of this space equipped with uniform measure). 

\begin{theorem} \label{thm:uni-shift}
If $A$ is an effectively open subset of the Cantor space of measure less than~$1$, then for every Martin-L\"of random sequence $\omega$ at least one of its tails   $\omega$, $\sigma(\omega)$, $\sigma(\sigma(\omega))$,\ldots does not belong to $A$.
\end{theorem}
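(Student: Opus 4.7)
The plan is to build a Martin-L\"of test $(G_n)_{n \ge 1}$ whose intersection contains the set $V := \bigcap_{n \ge 0}\sigma^{-n}(A)$ of sequences whose entire forward orbit remains in $A$. Once this is done, any Martin-L\"of random $\omega$ escapes some $G_n$, hence is not in $V$, meaning some tail $\sigma^n(\omega)$ is not in $A$, which is precisely the theorem.

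Start from a c.e.\ enumeration $A = \bigcup_t [w_t]$ and set $A^{[s]} := \bigcup_{t \le s}[w_t]$, $m_s := \max_{t \le s}|w_t|$, and $p_s := \mu(A^{[s]})$. Each $A^{[s]}$ is clopen and determined by the first $m_s$ bits, and $p_s$ is a computable rational with $p_s \le \mu(A) < 1$. The central probabilistic input is an independence observation: if $C$ is clopen and depends only on the first $m$ bits, then the events $\sigma^{jm}(\omega) \in C$ for $j \ge 0$ depend on pairwise disjoint blocks of coordinates of $\omega$ and are therefore mutually independent under the uniform measure. Consequently $\mu\bigl(\bigcap_{j<N}\sigma^{-jm}(C)\bigr) = \mu(C)^N$ for every $N$.

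Using this, for each $n, s \ge 1$ I compute $N_{n,s}$ with $p_s^{N_{n,s}} \le 2^{-n-s-1}$ (possible since $p_s < 1$ is a computable rational) and set
\[
D_{n,s} := \bigcap_{j < N_{n,s}}\sigma^{-jm_s}(A^{[s]}), \qquad G_n := \bigcup_{s \ge 1} D_{n,s}.
\]
Then $D_{n,s}$ is clopen with $\mu(D_{n,s}) \le 2^{-n-s-1}$ by independence, each $G_n$ is c.e.\ open uniformly in $n$, and $\mu(G_n) \le \sum_{s \ge 1} 2^{-n-s-1} \le 2^{-n}$, so $(G_n)$ is a valid Martin-L\"of test.

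The last and most delicate step is to verify $V \subseteq G_n$ for every $n$: given $\omega \in V$, one must produce an $s$ with $\sigma^{jm_s}(\omega) \in A^{[s]}$ for all $j < N_{n,s}$. The hypothesis $\omega \in V$ only supplies each such iterate in $A$, hence with a prefix captured at \emph{some} finite enumeration stage, while both the block width $m_s$ and the iteration count $N_{n,s}$ themselves grow with $s$. The plan for handling this is a short monotonicity / fixed-point argument: iterate the map sending $s$ to the largest enumeration stage needed by the current list of iterates, using that $(A^{[s]})$ is increasing and exhausts $A$, until the process stabilizes. Showing that this iteration indeed stabilizes for every $\omega \in V$---coordinating the growth of $m_s$, $N_{n,s}$, and the per-iterate witness stages---is the main technical difficulty I foresee.
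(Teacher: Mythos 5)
Your test $(G_n)$ is well-formed and the measure bound is correct: the events $\sigma^{-jm_s}(A^{[s]})$ do depend on disjoint blocks of coordinates, so $\mu(D_{n,s})=p_s^{N_{n,s}}\le 2^{-n-s-1}$ and $\mu(G_n)\le 2^{-n}$. The problem is exactly the step you flag: the inclusion $V\subseteq G_n$ is not established, and the proposed fixed-point iteration does not in general stabilize. For $\omega\in V$ you know each tail $\sigma^{jm_s}(\omega)$ lies in $A$, hence enters $A^{[t]}$ at \emph{some} stage $t$, but nothing bounds $t$ in terms of $s$. When you pass to the stage $s'=\max_j t_j$, the target condition changes: you now need $N_{n,s'}$ iterates (and $N_{n,s}$ necessarily grows with $s$, since $p_s\to\mu(A)$ and the target $2^{-n-s-1}$ shrinks) spaced by the larger block length $m_{s'}$, each captured by stage $s'$. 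The map $s\mapsto s'$ can be strictly increasing forever; since the enumeration order of $A$ is arbitrary, one can arrange that for every $s$ some required iterate enters $A$ only after stage $s$, so the specific $G_n$ you built need not cover $V$. This is a genuine gap, not a routine verification.

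The paper's proof avoids this tension by not using independence or finite-stage approximations at all. Its key observation is that for an interval $I=x\cs$, any $\omega=x\beta$ in $A^*$ has its tail $\beta=\sigma^{|x|}(\omega)$ in the \emph{full} set $A$, so $I\cap A^*\subseteq xA$, a set of measure exactly $\mu(A)\mu(I)=r\mu(I)$. Decomposing the resulting cover into disjoint intervals and repeating multiplies the measure by $r$ at each round, yielding covers of measure $r^k$. The point is that the shift amount is adapted to $\omega$ (it is the length of the interval currently containing $\omega$) rather than being a fixed multiple of a single block length, which is what makes the coverage automatic; and the multiplicativity $\mu(xA)=2^{-|x|}\mu(A)$ plays the role your independence computation was meant to play, but with the whole of $A$ rather than a clopen approximation. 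If you want to save your architecture, replace $D_{n,s}$ by the union of all cylinders $[w_{t_0}w_{t_1}\cdots w_{t_{k-1}}]$ over tuples from a disjoint-interval presentation of $A$: this set has measure $r^k$, is uniformly effectively open, and contains $V$ for the reason just given --- which is the paper's argument in a different notation.
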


Recalling the definition of Martin-L\"of randomness (a sequence is random if it is outside any effectively null set) we can reformulate \Kucera's theorem as follows: 
\begin{quote}
\emph{Let $A$ be an effectively open set of measure less than $1$. Consider the set $A^*$ of all sequences $\omega$ such that every tail $\sigma^{(n)}(\omega)$ belongs to $A$. Then $A^*$ is an effectively null set.}
\end{quote}

Before presenting the proof, let us mention an interpretation of this result. Recall that the universal Martin-L\"of test is a computable sequence $U_1, U_2,\ldots$ of effectively open sets such that $\mu(U_i)\le 1/2^i$ and the intersection $\cap_i U_i$ is the maximal effectively null set, i.e., the set of all non-random sequences. \Kucera's theorem shows that randomness can be (in a paradoxical way) characterized by $U_1$ alone: a sequence is non-random if and only if all its tails belong to $U_1$. (In one direction it is \Kucera's theorem, in the other direction we need to note that a tail of a non-random sequence is non-random.)

\begin{proof} [of \Kucera's theorem]
 We start with the following observation: it is enough to show that for every interval $I$, we can uniformly construct an effectively open set $J\subset I$ that contains $I\cap A^*$ and such that  $\mu(J)\le r \mu(I)$ for some fixed $r<1$ (here we call an \emph{interval} any set of type $x\cs$, where~$x$ is some finite string, i.e., the set of infinite binary sequences that start with~$x$). Then we represent the effectively open set $A$ of measure $r<1$ as a union of disjoint intervals $I_1,I_2,\ldots$, construct the sets $J_i$ for every $I_i$ and note that the union $A_1$ of all $J_i$ is an effectively open set that contains $A^*$ and has measure $r^2$ or less. Splitting $A_1$ into disjoint intervals and repeating this argument, we get a set $A_2$ of measure at most $r^3$, etc. In this way we get a effectively open cover for $A^*$ of arbitrarily small measure, so $A^*$ is an effectively null set.

It remains to show how to find $J$ given $I$. The interval $I$ consists of all sequences that start with some fixed prefix $x$, i.e., $I=x\cs$. Since sequences in $A^*$ have all their tails in $A$, the intersection $I\cap A^*$ is contained in $xA$, and the latter set has measure $r \mu(I)$ (where $r=\mu(A)$). \qed
\end{proof}

Note that this proof also shows the following: suppose~$A$ is an effectively open set of measure less than~$1$, and $A$ can be written as a disjoint union of intervals $A=x_1\cs \cup x_2\cs \cup \ldots$. Let $\omega$ be an infinite sequence that can be written as $\omega=w_1w_2w_3 \ldots$ where for all~$i$, $w_i=x_j$ for some~$j$. Then $\omega$ is not random. (If $A$ contains all non-random sequences, the reverse implication is also true, and we get yet another criterion of randomness.)

\subsection{Effective Kolmogorov 0-1 law}

Trying to find characterizations of randomness similar to \Kucera's theorem, one may look at Kolmogorov's $0$-$1$-law. It says that any measurable subset~$A$ of the Cantor space that is stable under finite changes of bits (i.e. if $\omega \in A$ and $\omega'$ is equal to $\omega$ up to a finite change of bits, then $\omega' \in A$) has measure $0$ or $1$. It can be reformulated as follows: let $A$ be a (measurable) set of measure less than~$1$. Consider the set $A^*$ defined as follows: $\omega\in A^*$ if and only if all sequences that are obtained from $\omega$ by changing finitely many terms, belong to $A$. Then $A^*$ has measure zero (indeed, $A^*$ is stable under finite changes and cannot have measure~$1$). Note also that we may assume without loss of generality that $A$ is open (replacing it by an open cover of measure less than~$1$).

A natural effective version of Kolmogorov's $0$-$1$-law can then be formulated as follows. (In fact, this statement was considered and proved by \Kucera\ but was not explicitly mentioned in \cite{Kucera1985}.)

\begin{theorem} \label{thm:finite-change}
Let $A$ be an effectively open set of measure $r<1$. Consider the set $A^*$ of all sequences that belong to $A$ and remain in $A$ after changing finitely many terms. Then $A^*$ is an effectively null set.
\end{theorem}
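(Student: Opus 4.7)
The plan is to imitate the proof of \Kucera's theorem (Theorem~\ref{thm:uni-shift}): for each interval $I$ we shall construct, uniformly, an effectively open set $J \subseteq I$ containing $I \cap A^*$ with $\mu(J) \le r\, \mu(I)$. The splitting/iteration step (breaking a cover into disjoint intervals and producing a new cover of measure $r^2, r^3, \ldots$ still containing $A^*$) then carries over verbatim from the proof of Theorem~\ref{thm:uni-shift}.

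Fix $I = x\cs$ with $|x|=n$, and consider $\omega = x\tau \in I \cap A^*$. By the very definition of $A^*$, for every $y \in \{0,1\}^n$ the sequence $y\tau$ is a finite modification of $\omega$ and therefore lies in $A$. Introduce the effectively open sets $A'_y = \{\tau \in \cs : y\tau \in A\}$ (uniformly in $y$), and set
$$V_n = \bigcap_{y \in \{0,1\}^n} A'_y.$$
As a finite intersection of effectively open sets, $V_n$ is itself effectively open, uniformly in~$n$. The observation above gives $\tau \in V_n$, hence $I \cap A^* \subseteq xV_n$, and we take $J := xV_n$.

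To finish this local step we must bound $\mu(V_n)$. From $\mu(A'_y) = 2^n \mu(A \cap y\cs)$, summed over the $2^n$ strings $y$ of length~$n$, we get $\sum_y \mu(A'_y) = 2^n \mu(A) \le 2^n r$, so the average value of $\mu(A'_y)$ is at most~$r$. In particular some $y^*$ satisfies $\mu(A'_{y^*}) \le r$, and since $V_n \subseteq A'_{y^*}$ we conclude $\mu(V_n) \le r$; equivalently $\mu(J) \le r\, \mu(I)$, which is exactly what \Kucera's iteration requires. The choice of $y^*$ is nonconstructive and appears only in the measure bound, whereas $V_n$ itself is constructed fully effectively.

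The main obstacle compared to Theorem~\ref{thm:uni-shift} is precisely this measure bound: \Kucera's original cover $xA$ for the tail version trivially has measure $r\mu(I)$, whereas here we must argue via averaging that imposing \emph{all} prefix-rewriting constraints of a given length simultaneously still carves out a set of relative size at most~$r$ in each interval. Once this averaging is in place, the iteration (split $A_k$ into disjoint intervals, replace each interval $x\cs$ by $xV_{|x|}$, and observe that the resulting $A_{k+1}$ has measure at most $r\mu(A_k)$ and still contains $A^*$ since every $\omega\in A^*$ sits in some interval $x\cs\subseteq A_k$ and the reasoning of the previous paragraph applies) is mechanical and structurally identical to the iteration in the proof of Theorem~\ref{thm:uni-shift}.
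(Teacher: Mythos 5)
Your proposal is correct and follows essentially the same route as the paper's own proof: the sets $A'_y$ and their intersection $V_n$ are exactly the paper's $A_y$ and $B$, the measure bound via averaging over prefixes of length $|x|$ is identical, and the iteration is the standard one from \Kucera's theorem.
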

(As we have seen, the last two sentences can be replaced by the following claim: \emph{any Martin-L\"of random sequence can be moved outside $A$ by changing finitely many terms.})

\begin{proof}To prove this effective version of the $0$-$1$-law, consider any interval $I$. As before, we want to find an effectively open set $U\subset I$ that contains $A^*\cap I$ and has measure at most $r\mu(I)$. Let $x$ be the prefix that defines $I$, i.e., $I=x\cs$. For every string $y$ of the same length as~$x$, consider the set 
	$
A_y=\{ \omega \mid y \omega\in A\}.
	$
It is easy to see that the average measure of $A_y$ (over all $y$ of a given length) equals $\mu(A)=r$. Therefore, the set
	$
B=\bigcap_{y} A_y
	$
(which is effectively open as an intersection of an effectively defined finite family of open sets) has measure at most~$r$. Now take $U=xB$. Let us show that $U$ is as wanted. First,~$U$ is an effectively open set, contained in~$I$, and of measure $r \mu(I)$. Also, it contains every element of $A^*\cap I$. Indeed, if $\alpha \in A^* \cap I$, $x$ is a prefix of $\alpha$, so one can write $\alpha=x \beta$. Since $\alpha \in A^*$, any finite variation of $\alpha$ is in~$A$, so for all~$y$ of the same length as~$x$, $y\beta \in A$. Therefore, $\beta$ is in all~$A_y$, and therefore is in $B$. Since $\alpha=x\beta$, it follows that $\alpha$ is in $xB=U$. \qed
\end{proof} 	

\subsection{Adding prefixes}
 
We have considered left shifts (deletion of prefixes) and finite changes. Another natural transformation is the \emph{addition of finite prefixes}. It turns out that a similar result can be proven in this case (although the proof becomes a bit more difficult).

\begin{theorem}\label{thm:add-prefix}
Let $A$ be an effectively open set of measure $r<1$. Let $A^*$ be the set of all sequences $\omega$ such that $x\omega\in A$ for every  binary string $x$. Then $A^*$ is an effectively null set. In other words, for every Martin-L\"of random sequence $\omega$ there exists a string $x$ such that $x\omega\notin A$.
\end{theorem}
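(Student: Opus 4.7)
The plan is to adapt the Ku\v{c}era-style interval argument of Theorems \ref{thm:uni-shift} and \ref{thm:finite-change}: given any interval $I = x\cs$, construct an effectively open $J \subseteq I$ covering $A^* \cap I$ with $\mu(J) \le c\,\mu(I)$ for some $c < 1$, and then iterate exactly as in the proof of Theorem \ref{thm:uni-shift}.

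For $\omega = x\beta \in A^* \cap I$, the condition $zx\beta \in A$ for every string $z$ gives $\beta \in A_{zx} := \{\gamma : zx\gamma \in A\}$; hence
\[
A^* \cap I \;\subseteq\; x \cdot \bigcap_{|z|=N}A_{zx}
\]
for every $N$, and the right-hand side is effectively open. Averaging gives
\[
\mu\Bigl(\bigcap_{|z|=N}A_{zx}\Bigr) \;\le\; \frac{1}{2^N}\sum_{|z|=N}\mu(A_{zx}) \;=\; 2^{|x|}\,\mu\bigl(A \cap \sigma^{-N}(x\cs)\bigr).
\]

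To make the construction effective, approximate $A$ from below by the finite sub-enumerations $A_M := \bigcup_{i\le M} x_i\cs$. When $N$ exceeds the depth of $A_M$, the set $A_M$ is stochastically independent of $\sigma^{-N}(x\cs)$, giving
\[
2^{|x|}\,\mu\bigl(A \cap \sigma^{-N}(x\cs)\bigr) \;\le\; \mu(A_M) + 2^{|x|}\bigl(r - \mu(A_M)\bigr),
\]
whose right-hand side tends to $r$ as $M \to \infty$. For $M$ sufficiently large (depending on $|x|$) the bound falls below $(1+r)/2 < 1$, yielding $J := x \cdot \bigcap_{|z|=N}A_{zx}$ (with $N$ the depth of $A_M$) satisfying $\mu(J) \le c\,\mu(I)$ for $c = (1+r)/2$. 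The main obstacle is that $r$ is only lower semicomputable, so $M$ must be selected adaptively during the enumeration of $A$; once this is arranged the iteration of Theorem \ref{thm:uni-shift} applies to produce a cover of $A^*$ by effectively open sets of measures $\le c^k \mu(A) \to 0$.
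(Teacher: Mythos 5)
Your covering identity is correct: $A^*\cap I\subseteq x\cdot\bigcap_{|z|=N}A_{zx}$, and averaging does give $\mu\bigl(\bigcap_{|z|=N}A_{zx}\bigr)\le 2^{|x|}\mu\bigl(A\cap\sigma^{-N}(x\cs)\bigr)$. But the step you flag as ``the main obstacle'' is a genuine gap, not a technicality, and I do not see how an ``adaptive selection of $M$'' could close it. To certify that $\mu(A_M)+2^{|x|}\bigl(r-\mu(A_M)\bigr)<(1+r)/2$ you must bound $r-\mu(A_M)=\mu(A\setminus A_M)$ from above by roughly $(1-r)2^{-|x|-1}$, i.e.\ approximate the lower semicomputable real $\mu(A)$ from above to a prescribed precision. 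That is impossible in general, and it is precisely the obstruction this theorem is meant to overcome: the case where $\mu(A)$ is computable was already known (see the footnote in the introduction). A rational upper bound $r_0>\mu(A)$ does not help, since $r_0-\mu(A_M)\to r_0-\mu(A)>0$ and $2^{|x|}(r_0-\mu(A_M))$ need not become small. Nor can you hedge by enumerating the candidate covers $J_N$ and switching between them: an effectively open cover cannot retract intervals already enumerated, and among the $J_N$ you cannot effectively recognize one of measure at most $c\,\mu(I)$, because measures of effectively open sets admit no computable upper bounds. The root of the difficulty is that your family $\{zx\cs:|z|=N\}$ covers only $\sigma^{-N}(x\cs)$, a set of measure $2^{-|x|}$; the trivial bound $\mu(A\cap\sigma^{-N}(x\cs))\le\mu(A)$ then gives density $\le 2^{|x|}r$, which is useless, and this is what forces you into the independence estimate and the approximation of $A$.

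There are two effective ways out. The paper lets $z$ range over strings of \emph{different} lengths, produced by a fixed Cantor-set-type construction depending only on $x$ and a parameter $\varepsilon$, so that the intervals $z_ix\cs$ are pairwise disjoint and cover $\cs$ up to measure $\varepsilon$. Then $\sum_i\mu(A\cap z_ix\cs)\le\mu(A)=r$ holds trivially, the minimal density of $A$ among the $z_ix\cs$ is at most $r/(1-\varepsilon)$, and no approximation of $A$ or of $r$ is needed. Alternatively, you could keep equal-length blocks but intersect over $z$ of several lengths $N_1<\dots<N_k$ chosen so that the events $\sigma^{-N_j}(x\cs)$ are independent, and apply Chebyshev's inequality to $\frac1k\sum_j\chi_{\sigma^{-N_j}(x\cs)}$ exactly as in Lemma~\ref{lem:average}; there $k$ is computable from $|x|$ and a rational bound on $r$. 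One of these repairs is needed before your argument goes through.
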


\begin{proof}To prove this statement, consider again some interval $I=x\cs$. We want to cover $A^*\cap I$ by an effectively open set of measure $r\mu(I)$. (In fact, we get a cover of measure $s\mu(I)$ for some constant $s\in(r,1)$, but this is enough.) Consider some string $z$. We know that the density of $A^*$ in $I$ does not exceed the density of $A$ in $zI=zx\cs$. Indeed, $x\omega\in A^*$ implies $zx\omega\in A$ by definition of $A^*$. 

Moreover, for any finite number of strings $z_1,\ldots,z_k$ the set $A^*$ is contained in the intersection of sets $\{\omega\mid z_i \omega\in A\}$, and the density of $A^*$ in $I$ is bounded by the minimal (over $i$) density of $A$ in $z_i I=z_ix\cs$.

Now let us choose $z_1,\ldots,z_k$ in such a way that the intervals $z_ix\cs$ are disjoint and cover $\cs$ except for a set of small measure. This is possible for the same reason as in a classic argument that explains why the Cantor set in $[0,1]$ has zero measure. We start, say, with $z_1=\Lambda$ and get the first interval $x\cs$. The rest of $\cs$ can be represented as a union of disjoint  intervals, and inside each interval $u\cs$ we select a subinterval $ux\cs$ thus multiplying the size of the remaining set by $(1-2^{-|x|})$. Since this procedure can be iterated indefinitely, we can make the rest as small as needed.

Then we note that the density of $A$ in the union of disjoint intervals (and this density is close to $r$ if the union covers $\cs$ almost entirely) is greater than or equal to the density of $A$ in one of the intervals, so the intersection (an effectively open set) has density at most $s$ for some constant $s\in(r,1)$, as we have claimed. (We need to use the intersection and not only one of the sets since our construction should be effective even when we do not know for which interval the density is minimal.) \qed
\end{proof}

\subsection{Bidirectional sequences and shifts}\label{subsec:bidirectional}

Recall the initial discussion in terms of ergodic theory. In this setting it is more
natural to consider bi-infinite binary sequences, i.e., mappings of
type $\mathbb{Z}\to\mathbb{B}=\{0,1\}$; the uniform measure 
$\mu$ can be naturally defined on this space, too. On this space the transformation~$T$ corresponding to the shift to the left is reversible: any sequence can be shifted left or right.

The result of Theorem~\ref{thm:uni-shift} remains true in this setting.

\begin{theorem} \label{thm:bi-shift}
Let $A$ be an effectively open set of $\mathbb{B}^\mathbb{Z}$, of measure $r<1$. The set~$A^*$ of all sequences that remain in~$A$ after any arbitrary shift (any distance in any direction) is an effectively null set.
\end{theorem}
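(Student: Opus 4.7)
The plan is to adapt Kučera's template from Theorem~\ref{thm:uni-shift}: reduce the theorem to the local claim that, for every basic cylinder $I\subseteq \mathbb{B}^{\mathbb{Z}}$ specified by fixing values on a finite window $W$, one can effectively produce an open set $J\subseteq I$ containing $I\cap A^*$ with $\mu(J)\le s\,\mu(I)$ for some fixed $s<1$; granted the claim, the iteration from the proof of Theorem~\ref{thm:uni-shift}---split the current cover into disjoint basic cylinders and shrink each using the local claim---produces effectively open covers of $A^*$ of arbitrarily small measure.

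The new difficulty is that $T^k$ sends a cylinder $I$ to another cylinder with a different window, and $A$ need not have density $\le r$ in the image, so there is no direct bi-infinite analogue of the inclusion $I\cap A^*\subseteq xA$ used in the one-sided case. To get around this, I would approximate $A$ from inside: for each $N$, let $A_N$ be the union of those basic cylinders $C\subseteq A$ whose windows are contained in $[-N,N]$. Then $A_N\nearrow A$, $\mu(A_N)\nearrow r$, and each $A_N$ is a finite union of cylinders with support in $[-N,N]$.

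Now for $W\subseteq[-m,m]$ and any shift $k>N+m$, the set $T^{-k}(A_N)$ is determined by coordinates in $[-N+k,N+k]\subseteq(m,\infty)$, hence disjoint from $W$: $T^{-k}(A_N)$ and $I$ are probabilistically independent, so $\mu(T^{-k}(A_N)\cap I)=\mu(A_N)\mu(I)$. Writing $A\subseteq A_N\cup(A\setminus A_N)$ with $\mu(A\setminus A_N)=r-\mu(A_N)$ and setting $J:=I\cap T^{-k}(A)$ (effectively open, and containing $I\cap A^*$ because $A^*\subseteq T^{-k}(A)$), one gets
\[
\mu(J)\le \mu(A_N)\,\mu(I)+(r-\mu(A_N)).
\]
Picking $N$ large enough that $r-\mu(A_N)\le\tfrac{1-r}{2}\mu(I)$, which is possible since $\mu(A_N)\nearrow r$, yields $\mu(J)\le\tfrac{1+r}{2}\mu(I)$, establishing the local claim with $s=\tfrac{1+r}{2}<1$.

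The main obstacle will be making the choice of $N$ effective: since $\mu(A)=r$ is only lower semicomputable, the approach is to search incrementally over $N$, using that each $\mu(A_N)$ is a rational number that is exactly computable, and to halt at the first $N$ for which $r-\mu(A_N)\le\tfrac{1-r}{2}\mu(I)$; termination is guaranteed by $\mu(A_N)\nearrow r$. This step relies on $r$ itself being available as data, as is standard when the hypothesis is phrased ``of measure $r<1$.''
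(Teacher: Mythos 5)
Your reduction to a local density claim and the independence idea (push $A$ far enough away that its finitely supported part $A_N$ becomes independent of the window of $I$) are sound, and the estimate $\mu(J)\le \mu(A_N)\mu(I)+\mu(A\setminus A_N)$ is correct; this does prove that $A^*$ is a \emph{null} set. The gap is exactly where you locate it, and it is not repairable in the way you suggest: to halt at the first $N$ with $r-\mu(A_N)\le\tfrac{1-r}{2}\mu(I)$ you must \emph{certify an upper bound} on $\mu(A)-\mu(A_N)$, i.e.\ an upper bound on $\mu(A)$ to precision $\tfrac{1-r}{2}\mu(I)$. But the measure of an effectively open set is only lower semicomputable: no finite amount of enumeration of $A$ ever certifies $\mu(A)\le\mu(A_N)+\delta$. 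Reading ``$r$ available as data'' as meaning that the exact real $\mu(A)$ is computable is not the standard convention (the standard data is a single rational upper bound $s<1$ on $\mu(A)$), and it would collapse the theorem to the case of computable measure, which the paper's introduction explicitly flags as the previously known, easier case. Nor does one rational of advice suffice for your construction: the Ku\v{c}era iteration needs a suitable $N$ for each of the infinitely many intervals $I$ of shrinking measure produced along the way, which amounts to a computable modulus of convergence for $\mu(A_N)\nearrow\mu(A)$, i.e.\ to computing $\mu(A)$. (A secondary issue: with $A_N$ defined as the union of \emph{all} cylinders with window in $[-N,N]$ contained in $A$, that finite union is only c.e., not decidable, so $\mu(A_N)$ is itself only lower semicomputable; this is fixable by taking the first $N$ cylinders enumerated into $A$, but the main obstacle remains.)

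The paper circumvents precisely this difficulty by never trying to select a single good shift. It intersects $I$ with \emph{many} shifted copies $T^{-iN}(A)$ for $i\le k$, bounds the measure of the intersection by the minimum, hence by the average, of the quantities $\mu(I\cap T^{-iN}(A))=\mu(A\cap T^{iN}(I))$, and controls that average by a Chebyshev estimate on the frequency function $\frac{1}{k}\sum_i\chi_{T^{iN}(I)}$ (Lemma~\ref{lem:average}). The required $k$ depends only on $\mu(I)$ and on the gap between a fixed rational upper bound on $\mu(A)$ and the target $s$, so it is computable; no upper approximation of $\mu(A)$ beyond that single fixed rational is ever needed. To keep your single-shift structure you would have to add the hypothesis that $\mu(A)$ is a computable real, which yields a strictly weaker theorem.
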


To prove this statement, consider any $s\in(r, 1)$.
As usual, it is enough to find (effectively) for every interval $I_x$ an effectively open subset of $I_x$ that contains $A^*\cap I_x$ and has measure at
most $s\mu(I_x)$. Here $x$ is a finite partial function from $\mathbb{Z}$ to $\mathbb{B}$ and $I_x$ is the set of all its extensions. (One may assume that $x$ is contiguous, since every other interval is a finite union of disjoint contiguous intervals, but this is not important for us.) Then we may iterate this construction, replacing each interval of an effectively open set by an open set inside this interval, and so on until the total measure ($s^k$, where $k$ is the number of iterations) becomes smaller than any given $\varepsilon>0$.

Assume that some $I_x$ is given. Note that $A^*$ is covered by every shift of $A$, so any intersection of $I_x$ with a finite collection of shifted versions of~$A$ (i.e., sets of type $T^n(A)$ for $n \in \mathbb{Z}$) is a cover for $I_x \cap A^*$. It remains to show that the intersection of properly chosen shifts of~$A$ has density at most $s$ inside $I_x$. To estimate the measure of the intersection, it is enough to consider the minimum of measures, and the minimum can be estimated by estimating the average measure.

More formally, we first note that by reversibility of the shift and the invariance of the measure, we have
\[
\mu \big(I_x \cap T^{-n}(A) \big)= \mu\big(A \cap T^{n}(I_x) \big)
\]
for all~$n$. Then we prove the following lemma:

\begin{lem}\label{lem:average}
 Let $J_{1},\ldots,J_{k}$ be independent intervals of the same measure $d$ corresponding to disjoint functions $x_1,\ldots,x_k$ of the same length. Then the average of the numbers
	$$
\mu(A\cap J_{1}),\ldots,\mu(A\cap J_{k})
        $$
does not exceed~$s d$ if~$k$ is large enough. Moreover such a~$k$ can be found effectively.
\end{lem}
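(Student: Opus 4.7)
The plan is to recast the arithmetic average as a single integral and then exploit the independence of $J_1, \ldots, J_k$ to control the fluctuation of $\frac{1}{k}\sum_i \mathbf{1}_{J_i}$ around its mean $d$. Setting $F_k(\omega) = \frac{1}{k}\sum_{i=1}^k \mathbf{1}_{J_i}(\omega)$, linearity gives
\[
\frac{1}{k}\sum_{i=1}^k \mu(A \cap J_i) \;=\; \integral{\mathbf{1}_A\,F_k}{\mu}.
\]
Since the partial functions $x_1, \ldots, x_k$ have pairwise disjoint domains, the events $J_i$ are mutually independent under $\mu$, each of probability $d$. Hence $F_k$ is the mean of $k$ i.i.d.\ Bernoulli$(d)$ variables, with $\mathbb{E}[F_k] = d$ and $\operatorname{Var}(F_k) = d(1-d)/k$.

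Writing $F_k = d + (F_k - d)$ and abbreviating $r = \mu(A)$, I split
\[
\integral{\mathbf{1}_A\,F_k}{\mu} \;=\; d\,r \;+\; \integral{\mathbf{1}_A(F_k - d)}{\mu}.
\]
The Cauchy--Schwarz inequality bounds the error term by $\|\mathbf{1}_A\|_2 \cdot \|F_k - d\|_2 = \sqrt{r}\cdot\sqrt{d(1-d)/k}$, so the average of the numbers $\mu(A \cap J_i)$ is at most $r\,d + \sqrt{r\,d(1-d)/k}$. Since $r < s$, the right-hand side is strictly less than $s\,d$ as soon as $k > r(1-d)/\bigl((s-r)^2 d\bigr)$; in particular, any $k$ at least $(1-d)/\bigl((s-r)^2 d\bigr)$ certainly works.

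For the effectivity claim, the threshold is an explicit rational expression in $d$ (computable), $s$ (a fixed rational strictly exceeding $r$), and $r$ itself. To compute a valid $k$ it suffices to supply a rational upper bound $r_0$ on $r$ with $r_0 < s$: replacing $r$ by $r_0$ in the threshold yields a larger, still valid, computable bound. Such an $r_0$ is available in the surrounding context of Theorem~\ref{thm:bi-shift}, where $s$ is chosen precisely because a strict gap above $r$ is known.

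The main obstacle is simply to recognise that the independence of the $J_i$ is what forces $\operatorname{Var}(F_k)$ to decay as $1/k$, making $F_k$ concentrate around $d$ and hence $\int \mathbf{1}_A F_k$ concentrate around $dr$; Cauchy--Schwarz then does all the work. Without pairwise disjoint domains for the $x_i$ the variance would not tend to zero and no finite $k$ would suffice, so the combinatorial hypothesis is essential rather than cosmetic.
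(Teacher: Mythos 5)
Your proof is correct and follows essentially the same route as the paper: rewrite the average as $\integral{\chi_A\cdot\frac{1}{k}\sum_i\chi_{J_i}}{\mu}$ and use the independence of the $J_i$ to show the empirical frequency concentrates around $d$ for large $k$. The only (cosmetic) difference is that you make the bound fully explicit via Cauchy--Schwarz and the variance $d(1-d)/k$, where the paper invokes Chebyshev's inequality informally; the effectivity discussion matches as well.
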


\begin{proof}[of Lemma~\ref{lem:average}]
The average equals
	$$
\frac{1}{k}\sum_i\mathsf{E} (\chi_A \cdot \chi_i) 
	$$
where $\chi_A$ is the indicator function of $A$ and $\chi_i$ is the 
indicator function of $J_i$. Rewrite this as
        $$
\mathsf{E} \left(\chi_A \cdot\frac{1}{k}\sum_i \chi_i\right),
	$$
and note that 
	$$
\frac{1}{k}\sum_i \chi_i
	$$
is the frequency of successes in~$k$ independent trials with individual probability~$d$. (Since the functions $x_i$ are disjoint, the corresponding intervals $J_i$ are independent events.) This frequency (as a function on the bi-infinite Cantor space $\mathbb{B}^\mathbb{Z}$) is close to $d$ everywhere except for a set of small measure (by the central limit theorem; in fact Chebyshev's inequality is enough). The discrepancy and the measure of this exceptional set can be made as small as needed using a large~$k$, and the difference is then covered by the gap between~$r$ and $s$. This ends the proof of the lemma. \\

Now, given an interval $I_x$, we cover $I_x \cap A^*$ as follows. First, we take a integer~$N$ larger than the size of the interval $I_x$. The intervals
\[
T^{N}(I_x), T^{2N}(I_x), T^{3N}(I_x), \ldots
\]
are independent and have the same measure as $I_x$, so we can apply the above lemma and effectively find a $k$ such that the average of 
	$$
\mu(A\cap T^{N}(I_x)),\ldots,\mu(A\cap T^{kN}(I_x))
        $$
does not exceed $s \mu(I_x)$. This means that for some $i \leq k$ one has
\[
\mu(I_x \cap T^{-iN}(A)) = \mu(A\cap T^{iN}(I_x)) \leq  s \mu(I_x)    
\]
Therefore, $I_x \cap \bigcap_{i \leq k} T^{-iN}(A)$ is an effectively open cover of $A^*$ of measure at most $s \mu(I_x)$. \qed
\end{proof}

The statement can be strengthened: we can replace all shifts by any infinite enumerable family of shifts.

\begin{theorem}\label{thm:bi-shift-re-set}
Let $A$ be an effectively open set \textup(of bi-infinite sequences\textup) of measure $\alpha<1$. Let $S$ be an computably enumerable infinite set of integers. Then the set
	$$
A^*=\{\omega\mid \text{$\omega$ remains in $A$ after shift by $s$, for every $s\in S$}\}
	$$
is an effectively null set.
\end{theorem}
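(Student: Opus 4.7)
The plan is to follow the proof of Theorem~\ref{thm:bi-shift} almost word-for-word, modifying only the selection of the shift parameters. Fix any ratio $\beta \in (\alpha, 1)$. As before, it suffices to show that given any cylinder $I_x$ (with $x$ a finite partial function $\mathbb{Z}\to\mathbb{B}$) we can effectively produce an effectively open subset of $I_x$ that contains $I_x \cap A^*$ and has measure at most $\beta\,\mu(I_x)$; iterating this construction on each resulting subinterval will then drive the cover of $A^*$ to arbitrarily small total measure.

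Given such an $I_x$, let $L$ be the diameter of the support of $x$, so that for any $p \ne q$ with $|p - q| > L$ the cylinders $T^p(I_x)$ and $T^q(I_x)$ correspond to partial functions with disjoint supports, and are therefore independent events. I would then apply Lemma~\ref{lem:average} with common measure $d = \mu(I_x)$ and ratio $\beta$ to obtain (effectively) an integer $k$ such that for any $k$ independent cylinders of measure $d$, the average of their intersection-measures with $A$ is at most $\beta\,d$.

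The only genuinely new step, and the only place where the hypotheses on $S$ actually enter, is the selection of $k$ shifts from $S$ (rather than from an arithmetic progression $N, 2N, \ldots, kN$ as in the previous theorem) with pairwise gaps exceeding $L$. This is where I expect any obstacle to be, although it is a mild one: being c.e.\ lets us enumerate $S$, and being infinite guarantees that any bounded forbidden region still leaves infinitely many elements of $S$ available. I would build the sequence greedily: having chosen $s_1, \ldots, s_{i-1}$, keep reading the enumeration of $S$ until an element appears at distance greater than $L$ from each previously selected $s_j$. The search terminates because $\bigcup_{j<i}[s_j-L,\, s_j+L]$ is finite while $S$ is infinite.

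Once $s_1, \ldots, s_k \in S$ have been obtained, the cylinders $T^{s_i}(I_x)$ are independent and of common measure $\mu(I_x)$, so Lemma~\ref{lem:average} supplies some $i \le k$ with $\mu(A \cap T^{s_i}(I_x)) \le \beta\,\mu(I_x)$, which by invariance of $\mu$ is equal to $\mu(I_x \cap T^{-s_i}(A))$. Since $A^* \subseteq T^{-s}(A)$ for every $s \in S$, the effectively open set $I_x \cap \bigcap_{i \le k} T^{-s_i}(A)$ covers $I_x \cap A^*$ and has measure at most $\beta\,\mu(I_x)$, completing the inductive step and hence the proof.
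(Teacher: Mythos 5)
Your proposal is correct and follows exactly the route the paper intends: the paper's own proof is the one-line remark that ``having infinitely many shifts, we can choose as many disjoint shifts of a given interval as we want,'' and your greedy selection of $k$ elements of $S$ with pairwise gaps exceeding the diameter of the support of $x$, followed by an application of Lemma~\ref{lem:average}, is precisely the fleshed-out version of that remark. No gaps.
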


(Reformulation: \emph{let $A$ be an effectively open set of measure less than~$1$; let $S$ be an infinite computably enumerable set of integers; let $\alpha$ be a Martin-L\"of random bi-infinite sequences. Then there exists $s\in S$ such that the $s$-shift of $\omega$ is not in~$A$}.) 

\begin{proof}
The proof remains the same: indeed, having infinitely many shifts, we can choose as many disjoint shifts of a given interval as we want.
\qed
\end{proof}

The argument used to prove Theorem~\ref{thm:bi-shift} (and Theorem~\ref{thm:bi-shift-re-set}) is more complicated than the previous ones (that do not refer to the central limit theorem): previously we were able to use disjoint intervals instead of independent ones. In fact the results about shifts in unidirectional sequences (both) are corollaries of the last statement. Indeed, let $A$ be an effectively open set of right-infinite sequences of measure less than $1$. Let $\omega$ be a right-infinite Martin-L\"of random sequence. Then it is a part of a bi-infinite random sequence $\bar \omega$ (one may use, e.g., van Lambalgen's theorem~\cite{vanLambalgen1987} on the random pairs, see  Section~\ref{sec:gen-lambalgen} for a precise statement). So there is a right shift that moves $\bar \omega$ outside $\bar A$, and also a left shift with the same property (here by $\bar A$ we denote the set of bi-infinite sequences whose right halves belong to $A$).

\section{A generalization to all ergodic transformations}\label{ergodic-generalization}

\subsection{Generalizing \Kucera's theorem}\label{ergodic-generalization-kucera}

First let us recall the notion of a computable transformation of the Cantor space~$\cs$. Consider a Turing machine with a read-only input tape and write-only output tape (where head prints a bit and moves to the next blank position). Such a machine determines a computable mapping of $\cs$ into the space of all finite and infinite binary sequences. Restricting this mapping to the inputs where the output sequence is infinite, we get a (partial) computable mapping from $\cs$ into~$\cs$.

\begin{theorem}\label{thm:generalized-kucera}
Let $\mu$ be a computable measure on~$\cs$. Let $T: \cs \rightarrow \cs$ be a partial computable, almost everywhere defined, measure-preserving, ergodic transformation of $\cs$. Let~$A$ be an effectively open subset of $\cs$ of measure less than~$1$. Let $A^*$ be the set of points~$x \in \cs$ such that $T^n(x) \in A$ for all $n\geq 0$. Then, $A^*$ is an effectively null set.
\end{theorem}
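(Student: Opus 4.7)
I would follow the inductive template of Kučera's original proof (Theorem~\ref{thm:uni-shift}). The key reduction is that it suffices to show, uniformly in any cylinder $I \subseteq \cs$, we can effectively construct an effectively open set $V_I \subseteq I$ with $A^* \cap I \subseteq V_I$ and $\mu(V_I) \leq s\,\mu(I)$, where $s$ is a fixed rational strictly between $\mu(A)$ and $1$. Iterating on the cylinder decomposition of the current cover then produces effectively open covers of $A^*$ with measures bounded by $s^k \to 0$, giving effective nullity.

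In Kučera's shift setting, $V_I = xA$ for $I = x\cs$ worked directly, exploiting the fact that the shift forgets the prefix of length $|x|$. For an arbitrary ergodic $T$ this structure is lost, and the natural replacement is a finite intersection
\[
V_I \;=\; I \cap \bigcap_{n<N} T^{-n}(A),
\]
which is uniformly effectively open in $(I, N)$ (since $T$ is computable and $A$ is effectively open) and trivially contains $A^* \cap I$ for every $N$. The required upper bound on $\mu(V_I)$ comes from the classical Birkhoff ergodic theorem applied to $\chi_A$. Ergodicity gives $\frac{1}{N}\sum_{n<N}\chi_A(T^n x) \to \mu(A)$ for $\mu$-a.e.\ $x$, so integrating against $\chi_I$ and using dominated convergence,
\[
\frac{1}{N}\sum_{n<N}\mu\bigl(T^{-n}(A)\cap I\bigr) \;\xrightarrow[N\to\infty]{}\; \mu(A)\,\mu(I) \;<\; s\,\mu(I).
\]
Hence for $N$ sufficiently large this Cesàro average lies below $s\,\mu(I)$, and at least one term $\mu(T^{-n}(A)\cap I)$ must as well; the intersection $V_I$ therefore has measure at most $s\,\mu(I)$.

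The main obstacle is \emph{effectivity} of the choice of $N$: the measure of the effectively open set $V_I$ is only lower semicomputable, and Birkhoff's theorem offers no effective rate of convergence in general, so for a particular candidate $N$ the bound $\mu(V_I)\le s\,\mu(I)$ cannot be verified by direct computation. I plan to circumvent this by not committing to a single $N = N(I)$: for each cylinder $I$ one runs in parallel, for every $N$, the cylinder enumeration of $I \cap \bigcap_{n<N}T^{-n}(A)$, and greedily assembles $V_I$ one cylinder at a time while monitoring that the accumulated measure stays below the threshold $s\,\mu(I)$. The existence of a good $N$, granted by the ergodic argument above, ensures that this greedy process eventually exhausts $A^* \cap I$. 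Non-uniformity in $A$ is permitted (we need the existence, but not the value, of a rational $s \in (\mu(A),1)$), and combining the shrinking-by-$s$ iteration with this assembly yields a Martin-Löf test for $A^*$. Carefully justifying this effective assembly, and verifying the cumulative measure control through the iteration, is the technical heart of the proof.
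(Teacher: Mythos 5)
Your reduction to cylinders, the candidate cover $V_I = I\cap\bigcap_{n<N}T^{-n}(A)$, and the existence (via the ergodic theorem) of a good $N$ all match the paper's proof, and you have correctly located the difficulty: $\mu(V_I)$ is only lower semicomputable, so a good $N$ cannot be certified by inspecting $\mu(V_I)$. But the step you defer as ``the technical heart'' is where the proof actually lives, and the greedy assembly you sketch does not work. Enumeration into an effectively open set is irreversible, and the sets $W_N=I\cap\bigcap_{n<N}T^{-n}(A)$ can have density in $I$ arbitrarily close to $1$ for small $N$ (e.g.\ when $A\supseteq I$, so that $W_1=I$) even though $\mu(A)<s$. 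Hence a process that monitors only the accumulated measure can be forced to spend essentially its whole budget $s\mu(I)$ on cylinders of $W_1\setminus W_{N_0}$ before the first good level $N_0$ becomes visible; after that it cannot absorb $W_{N_0}\supseteq A^*\cap I$ without crossing the threshold, and coverage fails. The existence of a good level does not rescue this: since $\mu(W_N)$ is only lower semicomputable you can eventually see that a level is bad but never certify that one is good, and no computable rule for deciding which cylinders to commit is supplied (the obvious ones --- switch levels when the threshold is threatened --- all suffer from the blocking phenomenon above). Note also that the uniformity in $I$ is genuinely needed, because the construction is iterated over all cylinders of each successive cover, so a non-uniform choice of $N(I)$ is not an option.

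The paper extracts effectivity from a source your proposal never invokes: measure preservation makes $\mu(T^{-i}(I))=\mu(I)$ \emph{computable}, even though $\mu(T^{-i}(A))$ is not. Rewriting the Ces\`aro average $\frac{1}{n+1}\sum_{i\le n}\mu(I\cap T^{-i}(A))$ as the inner product of $\chi_{T^{-n}(A)}$ with $a_n=\frac{1}{n+1}\sum_{i\le n}\chi_{T^{-i}(I)}$, one has $\|a_n-\mu(I)\|_2\to 0$ by von Neumann's mean ergodic theorem, and --- crucially --- this $L_2$-distance is computable uniformly in $n$, because each $T^{-i}(I)$ is an effectively open set of known measure $\mu(I)$ and can therefore be approximated from inside by finite unions of cylinders to any prescribed accuracy. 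One then searches for, and provably finds, an $n$ with $\|a_n-\mu(I)\|_2$ small enough that Cauchy--Schwarz forces the average, hence the minimum over $i\le n$, hence $\mu(I\cap\bigcap_{i\le n}T^{-i}(A))$, below $r\mu(I)$. Replacing your greedy assembly with this computation of $L_2$-norms is the missing ingredient. (A further minor point you gloss over: since $T$ is only partial computable, $T^{-i}(A)$ need not be open on the domain of $T$; the paper extends $T$ to the space of finite and infinite sequences to keep the cover effectively open.)
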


\begin{proof}
Let $r$ be a real number such that $\mu(A) < r < 1$. As before, given an interval~$I$, we want to (effectively) find an~$n$ such that $I \cap \bigcap_{i \leq n} T^{-i}(A)$ has measure at most $r \mu(I)$. This gives us an effectively open cover of $A^*\cap I$ having measure at most $r\mu(I)$; iterating this process, we conclude that $A^*$ is an effectively null set.

(A technical clarification is needed here. If we consider $T$ only on inputs where the output sequence is infinite, the set $T^{-1}(A)$ (and in general $T^{-i}(A)$) may no longer be open in~$\cs$. But since $T$ is almost everywhere defined, we may extend $T$ to the space $\widehat{\cs}$ of infinite \emph{and} finite sequences in a natural way and get an effectively open cover of the same measure.)

To estimate  $\mu(I \cap \bigcap_{i \leq n} T^{-i}(A))$, we note that it does not exceed the minimal value of $\mu(I \cap T^{-i}(A))$, which in its turn does not exceed the average (over $i\le n$) of $\mu(I\cap T^{-i}(A))$. This average,
\[
\tfrac{1}{n+1} \left[\mu(I\cap A)+\mu(I\cap T^{-1}(A))+\ldots+\mu(I\cap T^{-n}(A))\right]\eqno(*)
\]
can be rewritten as
\[
\tfrac{1}{n+1} \left[\mu(T^{-n}(I)\cap T^{-n}(A))+\mu(T^{-(n-1)}(I)\cap T^{-n}(A))+\ldots+\mu(I\cap T^{-n}(A))\right]
\]
since $T$ is measure preserving. The latter expression is the inner product of the indicator function of $T^{-n}(A)$ and the average $a_n=(\chi_0+\ldots+\chi_n)/(n+1)$, where $\chi_i$ is the indicator function of $T^{-i}(I)$.

As $n\to\infty$, the average $a_n$ converges in $L_2$ to the constant function $\mu(I)$, due to von Neumann's mean ergodic theorem. By the Cauchy--Schwarz inequality, this implies that the scalar product converges to $\mu(A)\mu(I)$ and therefore does not exceed $r\mu(I)$ for $n$ large enough.

It remains to (effectively) find a value of~$n$ for which the $L_2$-distance between $a_n$ and the constant $\mu(I)$ is small. Note that for all~$i$ the set $T^{-i}(I)$ is an effectively open set of measure $\mu(I)$ (recall that  $T$ is measure preserving), and $\mu(I)$ is computable since $\mu$ is a computable measure. Therefore, for any~$i$ and $\varepsilon>0$, one can uniformly approximate $T^{-i}(I)$ by its subset $U$ that is a finite union of intervals such that $\mu(T^{-i}(I) \setminus U) < \varepsilon$. This means that the $L_2$-distance between $a_n$ and the constant function $\mu(I)$ can be computed effectively, and we can wait until we find a term with any precision needed. In particular, we can effectively find an~$n$ such that the average $(*)$ is less than~$r$. By the above discussion, we then have $\mu(I \cap \bigcap_{i \leq n} T^{-i}(A))<r\mu(I)$, as needed.
\qed

\end{proof}

Now we get all the theorems of Section~\ref{sec:variations-kucera} (except for Theorem~\ref{thm:bi-shift-re-set}) as corollaries: the effective ergodic theorem for the bidirectional shift (Theorem~\ref{thm:bi-shift}) immediately follows as the bidirectional shift is clearly computable, measure-preserving and ergodic. Remark: technically we proved Theorem~\ref{thm:generalized-kucera} only for the Cantor space $\cs$, but the space of functions $\mathbb{Z} \rightarrow \mathbb{B}$ on which the bidirectional shift is defined, is computably isomorphic to $\cs$. By this we mean that there exists a computable measure preserving bijection from one space to another; for example, one could represent a two-directional sequence $\ldots \omega(-2)\omega(-1)\omega(0)\omega(1)\omega(2)\ldots$ by a one-directional sequence $\omega(0)\omega(-1)\omega(1)\omega(-2)\omega(2)\ldots$, and under this representation we can therefore represent the bidirectional shift as a measure preserving map from $\cs$ to itself. \\

Recalling the discussion in Section~\ref{subsec:bidirectional}, we see also that one can derive both Theorem~\ref{thm:uni-shift} (\Kucera's theorem for deletion of finite prefixes) and Theorem~\ref{thm:add-prefix} (addition of finite prefixes) from Theorem~\ref{thm:generalized-kucera}.

It turns out that even Theorem~\ref{thm:finite-change} (finite change of bits) can be proven in this way. Indeed, let us consider the map $F$ defined on $\cs$ by:  
\[
 F(1^n0\omega)=0^n1\omega~ ~ \text{for all $n$}, ~ ~  \text{and} ~ ~ F(11111 \ldots)=00000 \ldots
\]
\label{finite-change-function}%
($F$ adds 1 to the sequence in the dyadic sense). It is clear that~$F$ is computable and measure-preserving. That it is ergodic comes from Kolmogorov's 0-1 law, together with the observation that any two binary sequences $\omega, \omega'$ that agree on all but finitely many bits are in the same orbit:   $\omega'=F^n(\omega)$ for some $n \in \mathbb{Z}$.  The reverse is also true except for the case when sequences have finitely many zeros or finitely many ones. This cannot happen for a random sequence, so this exceptional case does not prevent us to derive Theorem~\ref{thm:finite-change} from Theorem~\ref{thm:generalized-kucera}.

\begin{remark}
Theorem~\ref{thm:bi-shift-re-set} asserts that given a random $\omega$, and an effectively open set $U$ of measure less than~$1$, there exists an~$n$ such that $T^n(\omega) \notin U$ (where $T$ is the shift in the space of bidirectional sequences), and that moreover~$n$ can be found in a computable enumerable set fixed in advance. This of course still holds for the unidirectional shift on $\cs$, but this does not hold for all ergodic maps. Indeed, this fact is related 
to the so-called \emph{strong mixing property} of the shift, which not all ergodic maps have. For example, a rotation of the circle by a computable irrational angle $\alpha$ (i.e., a mapping $x\mapsto x+\alpha\bmod 1$ on $\cs$ seen as the interval~$[0,1]$) is a computable ergodic map that does not have this property, and it is easy to construct a counterexample to the claim of Theorem~\ref{thm:bi-shift-re-set} for that particular map. 
\end{remark}

\subsection{An effective version of Birkhoff's ergodic theorem}\label{ergodic-generalization-birkhoff}

The generalization of \Kucera's theorem we proved in the previous section (Theorem~\ref{thm:generalized-kucera}) is only a weak form of ergodic theorem. It asserts that under the action of a computable ergodic map, the orbit of a Martin-L\"of point will intersect any given effectively closed set of positive measure, but it does not say anything about the frequency. This is what we achieve with the next theorem.

\begin{theorem}\label{th:effective-birkhoff}
Let $\mu$ be a computable measure on $\cs$.
Let $T:\cs \rightarrow \cs$ be a computable almost everywhere defined $\mu$-preserving ergodic transformation. Let $U$ be an effectively open set. For every Martin-L\"of random point~$\omega$,
\[
\lim_{n\to\infty} \frac{1}{n}\sum_{k=0}^{n-1}\chi_U( T^k(\omega))=\mu(U).
\]
\end{theorem}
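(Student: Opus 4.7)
The plan is to split the desired equality into the two inequalities $\liminf \geq \mu(U)$ and $\limsup \leq \mu(U)$, handled by completely different ingredients. For the $\liminf$ direction, I would write the effectively open set $U$ as an effective increasing union $U = \bigcup_m U_m$ of clopen sets (finite unions of cylinders), so that each $\chi_{U_m}$ is a computable function and each $\mu(U_m)$ is a computable real converging up to $\mu(U)$. Because $\chi_{U_m}$ is computable, one may invoke the already-known effective ergodic theorem for computable functions (Vyugin, extended in~\cite{HoyrupR2009b}) to obtain, for every ML-random $\omega$ and every fixed $m$, $\frac{1}{n}\sum_{k=0}^{n-1}\chi_{U_m}(T^k(\omega)) \to \mu(U_m)$. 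Since $\chi_U \geq \chi_{U_m}$ pointwise, this gives $\liminf_n \frac{1}{n}\sum_{k=0}^{n-1}\chi_U(T^k(\omega)) \geq \mu(U_m)$ for every $m$, and letting $m\to\infty$ yields the $\liminf$ bound.

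For the $\limsup$ direction, the plan is to reduce to Theorem~\ref{thm:generalized-kucera}. We may assume $\mu(U) < 1$ (otherwise we are done), and fix any rational $r \in (\mu(U), 1)$; this choice need not be effective. Writing $s_n(x) := \frac{1}{n}\sum_{k=0}^{n-1}\chi_U(T^k(x))$, this function is lower semicomputable in $x$ uniformly in $n$, because $\chi_U$ is lower semicomputable and $T$ is computable. Consequently
\[
V_N := \bigcup_{n \geq N}\{x : s_n(x) > r\}
\]
is an effectively open set, uniformly in $N$. Classical Birkhoff's theorem gives $s_n \to \mu(U) < r$ almost everywhere, so $\mu(V_N) \to 0$ as $N\to\infty$; in particular there exists some (non-effective) $N_0$ with $\mu(V_{N_0}) < 1$. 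Theorem~\ref{thm:generalized-kucera} then asserts that $(V_{N_0})^* := \{x : T^j(x) \in V_{N_0} \text{ for all } j \geq 0\}$ is effectively null.

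The final step is to observe that the limsup of the time average along an orbit is invariant under finite shifts: for any $j \geq 0$, the partial averages $s_n(T^j(\omega))$ and $s_n(\omega)$ differ by at most $2j/n$, so $\limsup_n s_n(T^j(\omega)) = \limsup_n s_n(\omega)$. Consequently, any ML-random $\omega$ with $\limsup_n s_n(\omega) > r$ would satisfy $\limsup_n s_n(T^j(\omega)) > r$ for every $j \geq 0$, forcing $T^j(\omega) \in V_N$ for every $N$ and every $j$, hence $\omega \in (V_{N_0})^*$, contradicting its randomness. Since this holds for every rational $r > \mu(U)$, we conclude $\limsup_n s_n(\omega) \leq \mu(U)$, and together with the $\liminf$ bound the theorem follows.

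The main obstacle is precisely that $\chi_U$ is only lower semicomputable, which blocks a direct appeal to the effective Birkhoff theorem for computable functions in the $\limsup$ direction; classical Birkhoff alone gives no control over the individual Martin-L\"of random points. The fix is to package the ``bad'' orbits into the effectively open sets $V_N$ and exploit the shift-invariance of the limsup to land inside the Ku\v{c}era-type trap $(V_{N_0})^*$. The non-effective selection of $r$ and $N_0$ is harmless, since Theorem~\ref{thm:generalized-kucera} delivers an effectively null cover once any such effectively open set of measure strictly less than $1$ is exhibited, and we only need the existence of such parameters to rule out random counterexamples.
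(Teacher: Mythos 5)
Your proof is correct and follows essentially the same route as the paper: the $\limsup$ bound via the sets $V_N$ (the paper's $G_N$), classical Birkhoff to get $\mu(V_{N_0})<1$, and the Ku\v{c}era-type trap of Theorem~\ref{thm:generalized-kucera} combined with shift-invariance of the $\limsup$ is exactly the paper's part~(1). The only divergence is the $\liminf$ direction, where you invoke Vyugin's effective ergodic theorem for the computable indicators $\chi_{U_m}$ of clopen approximations, whereas the paper applies its own $\limsup$ bound to the complements of such clopen subsets; the paper's remark explicitly notes that the Vyugin route is an equally valid alternative.
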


Note that the statement is symmetric, so the same is true for an effectively \emph{closed} set $C$.

\begin{proof}
Let $g_n(\omega) =\frac{1}{n}\sum_{k=0}^{n-1}\chi_U (T^k(\omega))$ be the frequency of $U$-elements among the first $n$ iterations of $\omega$. Let us first prove that $\limsup g_n(\omega)\le \mu(U)$. Then we show (see part (2) below) that $\liminf g_n(\omega)\ge \mu(U)$.

(1) Let  $r>\mu(U)$ be some rational number and let
	$$G_N=\{\omega\colon(\exists n\geq N)\, g_n(\omega)>r\}$$
be the set of points where some far enough frequency (average of at least $N$ terms) exceeds~$r$. The set $G_N$ is an effectively open set; indeed, the functions $g_n$ are lower semicomputable (uniformly in~$n$), hence the condition~$g_n(\omega)>r$ is enumerable. The sets $G_N$ form a decreasing sequence. We know by the classical Birkhoff's pointwise ergodic theorem that  $\mu(\bigcap_N G_N)=0$, since the sequence of functions $g_n$ converges to $\mu(U)<r$ $\mu$-almost everywhere. As a result, there exists~$N$ such that $\mu(G_N)<1$. We can thus apply Theorem~\ref{thm:generalized-kucera} to this $G_N$ and conclude  that for every Martin-L\"of random $\omega$ there exists $k$ such that $T^k(\omega)\notin G_N$. Hence $\limsup_n g_n(T^k(\omega))\le r$. Since a finite number of iterations does not change the $\limsup$, we conclude that $\limsup g_n(\omega)\le r$. The number $r$ was an arbitrary rational number greater than $\mu(U)$, so $\limsup g_n(\omega) \leq \mu(U)$. 

(2) We now prove that $\liminf g_n(\omega)\ge \mu(U)$. This in fact can be deduced from the first part of the proof. The set~$X$ is open, so it is a countable union of disjoint intervals. Taking a finite part of this countable union, we get an effectively closed set $C \subset U$ and can apply the previous statement to its complement. It says that the orbit of a Martin-L\"of random point $\omega$ will be in~$X'$ with frequency at least~$\mu(X')$ (the upper bound for the complement of~$C$ means a lower bound for~$C$). Since $\mu(C)$ can be arbitrarily close to $\mu(X)$, we conclude that $\liminf g_n(\omega)\ge \mu(U)$.\qed
\end{proof}

\begin{remark}
The inequality $\liminf g_n(\omega)\ge \mu(X)$ can actually be derived from the algorithmic version of Birkhoff's theorem proved by V'yugin \cite{Vyugin1997}, since $X'$ is open and closed set, but it is easier to refer to the first part of the proof. Note also that in this direction we do not need effectivity: $\liminf g_n(\omega) \ge \mu(X)$  for every open set $X$ and every Martin-L\"of random point $\omega$. Of course the other inequality generally fails for (non-effectively) open sets: indeed, the orbit of every point $\omega$ can be enclosed in a (non-effectively) open set of small measure.
\end{remark}

Theorem \ref{th:effective-birkhoff} extends to a larger class of sets in a straightforward way. We say that a set $A$ is effectively $\mu$-approximable if $\mu(A)=\sup\{\mu(F):F$ effectively closed and $F\subseteq A\}=\inf\{\mu(G):G$ effectively open and $A\subseteq G\}$. For instance, any $\mathrm{\Delta}^0_2$-set is effectively $\mu$-approximable.

\begin{corollary}
Let $X\subset \cs$ be an effectively $\mu$-approximable set. For every Martin-L\"of $\mu$-random $\omega$, $\lim \frac{1}{n}(\chi_X(\omega)+\ldots+\chi_X(T^{n-1}(\omega)))=\mu(X)$.
\end{corollary}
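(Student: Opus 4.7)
The plan is to sandwich the Birkhoff averages of $\chi_X$ between those of effectively closed and effectively open approximations to $X$, and then apply Theorem~\ref{th:effective-birkhoff} (together with the remark that it holds symmetrically for effectively closed sets) to each approximation separately.

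Fix a Martin-L\"of $\mu$-random $\omega$ and let $\varepsilon>0$ be rational. By effective $\mu$-approximability of $X$, there exist an effectively closed set $F$ and an effectively open set $G$ with $F\subseteq X\subseteq G$ and $\mu(G)-\mu(F)<\varepsilon$. Note that we do not need $F$ and $G$ to be computable from $\varepsilon$; their mere existence suffices, because Theorem~\ref{th:effective-birkhoff} (and its closed-set version, pointed out right after the statement of that theorem) applies to \emph{any} fixed effectively open or effectively closed set.

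Applying Theorem~\ref{th:effective-birkhoff} to $G$ and its closed-set analogue to $F$, we obtain, for this particular $\omega$,
\[
\lim_{n\to\infty}\frac{1}{n}\sum_{k=0}^{n-1}\chi_F(T^k\omega)=\mu(F),\qquad
\lim_{n\to\infty}\frac{1}{n}\sum_{k=0}^{n-1}\chi_G(T^k\omega)=\mu(G).
\]
Since $\chi_F\le \chi_X\le \chi_G$ pointwise, the corresponding partial averages satisfy the same inequalities, so
\[
\mu(F)\le \liminf_{n\to\infty}\frac{1}{n}\sum_{k=0}^{n-1}\chi_X(T^k\omega)\le \limsup_{n\to\infty}\frac{1}{n}\sum_{k=0}^{n-1}\chi_X(T^k\omega)\le \mu(G).
\]
Both $\mu(F)$ and $\mu(G)$ lie within $\varepsilon$ of $\mu(X)$, hence the $\liminf$ and $\limsup$ both lie within $\varepsilon$ of $\mu(X)$. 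As $\varepsilon$ was an arbitrary positive rational, the two coincide and equal $\mu(X)$, which is exactly the claimed convergence.

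There is really no hard step here: the whole point is that effective $\mu$-approximability is tailor-made to reduce the general claim to the two cases already handled by Theorem~\ref{th:effective-birkhoff}. The only conceptual thing worth underlining is that we apply the theorem separately for each $\varepsilon$ (or, equivalently, for a countable family of approximating pairs $(F_k,G_k)$); this is legitimate because the theorem's conclusion holds for every Martin-L\"of random point and every effectively open (resp.\ closed) set individually, so no uniformity in the approximations is required.
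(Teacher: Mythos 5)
Your proof is correct and is exactly the paper's argument, just written out in full: the paper's proof is the one-line remark that for every $\varepsilon>0$ one applies Theorem~\ref{th:effective-birkhoff} to the upper and lower $\varepsilon$-approximations of $X$ and sandwiches the frequency between them. Your added observation that no uniformity in $\varepsilon$ is needed is accurate and harmless.
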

\begin{proof}
For every $\varepsilon>0$ we can apply Theorem~\ref{th:effective-birkhoff} to the upper and lower $\varepsilon$-approximations of $X$; the frequency for $X$ is between them.\qed
\end{proof}

Theorem \ref{th:effective-birkhoff} can also be extended a wider class of functions than characteristic functions of sets. 

\begin{theorem}\label{thm:effective-birkhoff}
Let $f:\cs \to[0,+\infty]$ be lower semicomputable. For every Martin-L\"of random $\omega$,
\[
\lim_{n\to\infty} \frac{1}{n}\sum_{k=0}^{n-1}f(T^k(\omega))=\integral{f}{\mu}.
\]
\end{theorem}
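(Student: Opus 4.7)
The plan is to reduce the statement to Theorem~\ref{th:effective-birkhoff} via the standard representation of a non-negative lower semicomputable function as a positive linear combination of indicators of effectively open sets: $f=\sum_{i=1}^{\infty} q_i\,\chi_{U_i}$, where $(q_i)$ is a computable sequence of positive rationals and $(U_i)$ is a uniformly effectively open family. By monotone convergence, $\integral{f}{\mu}=\sum_i q_i\,\mu(U_i)$. For the lower bound, set $S_N=\sum_{i\le N} q_i \chi_{U_i}\le f$; applying Theorem~\ref{th:effective-birkhoff} to each $U_i$ with $i\le N$, for every Martin-L\"of random $\omega$ the ergodic averages of $S_N$ converge to $\sum_{i\le N} q_i \mu(U_i)$, so $\liminf_n \frac{1}{n}\sum_{k=0}^{n-1} f(T^k\omega)\ge \sum_{i\le N} q_i \mu(U_i)$. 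Letting $N\to\infty$ yields $\liminf\ge \integral{f}{\mu}$; this also disposes of the case $\integral{f}{\mu}=+\infty$, so from now on I assume $\integral{f}{\mu}<\infty$.

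For the upper bound, I would reuse the $G_N$-argument from the proof of Theorem~\ref{th:effective-birkhoff}, which relied only on lower semicomputability of the integrand. Fix a rational $r>\integral{f}{\mu}$. Each $g_n(x)=\frac{1}{n}\sum_{k=0}^{n-1}f(T^k x)$ is lower semicomputable uniformly in $n$, so $G_N=\{x:(\exists n\ge N)\,g_n(x)>r\}$ is effectively open. By the classical pointwise ergodic theorem (which applies since $f\in L^1(\mu)$), $g_n\to\integral{f}{\mu}<r$ $\mu$-almost everywhere, so $\mu(\bigcap_N G_N)=0$ and hence $\mu(G_N)<1$ for some $N$. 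Theorem~\ref{thm:generalized-kucera} then gives, for every Martin-L\"of random $\omega$, some $k_0$ with $T^{k_0}\omega\notin G_N$, i.e.\ $g_n(T^{k_0}\omega)\le r$ for all $n\ge N$. The identity
\[
(n+k_0)\,g_{n+k_0}(\omega) \;=\; \sum_{j=0}^{k_0-1} f(T^j\omega) \;+\; n\,g_n(T^{k_0}\omega)
\]
then yields $\limsup_n g_n(\omega)\le r$, provided the finite sum on the right is finite; letting $r\downarrow \integral{f}{\mu}$ completes the upper bound.

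The main obstacle is the last proviso: $f$ may genuinely take the value $+\infty$, so one must ensure that $f(T^j\omega)<\infty$ for $j=0,\dots,k_0-1$ whenever $\omega$ is Martin-L\"of random. Set $V_n=\{f>2^n\}$; each $V_n$ is effectively open uniformly in $n$, and Markov's inequality gives $\mu(V_n)\le 2^{-n}\integral{f}{\mu}$, so $\sum_n \mu(V_n)<\infty$. Thus $\{V_n\}$ is a Solovay test and $\{f=+\infty\}\subseteq\bigcap_n V_n$ is Martin-L\"of null. Since Martin-L\"of randomness is preserved under the computable measure-preserving map $T$ (the preimage of any Martin-L\"of test under $T$ is again a Martin-L\"of test), each $T^j\omega$ is Martin-L\"of random, hence $f(T^j\omega)<\infty$, as required.
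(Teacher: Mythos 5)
Your proof is correct and follows essentially the same route as the paper: the $\limsup$ bound via the effectively open sets $G_N$, classical Birkhoff, and Theorem~\ref{thm:generalized-kucera}, and the $\liminf$ bound by approximating $f$ from below by finite non-negative combinations for which Theorem~\ref{th:effective-birkhoff} already gives convergence (the paper uses basic functions, you use $\sum_{i\le N} q_i\chi_{U_i}$; these are interchangeable). Your one genuine addition is the explicit justification, via the Solovay test $V_n=\{f>2^n\}$ and preservation of randomness under $T$, that $f(T^j\omega)<\infty$ along the orbit of a random point, which is needed for the identity $\limsup_n f_n(\omega)=\limsup_n f_n(T^{k}(\omega))$ and is left implicit in the paper.
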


Note that we allow the integral to be infinite; in this case the sequence in the left-hand side has limit $+\infty$.

\begin{proof}
Let~$f$ be a lower semicomputable function with a finite integral. Let $f_n=\frac{1}{n}(f+\ldots+f\circ T^{n-1})$. Let $r>\integral{f}{\mu}$ be a rational number and 
           $$G_N=\{\omega\colon(\exists n\ge N)\, f_n(\omega)>r\}.$$
The set $G_N$ is an effective open set and $\mu(\bigcap_N G_N)=0$ as $f_n(\omega)\to \integral{f}{\mu}<r$ for $\mu$-almost every $\omega$ (by the classical version of Birkhoff's ergodic theorem). As a result, there exists $N$ such that $\mu(G_N)<1$. By Theorem~\ref{thm:generalized-kucera}, if $\omega$ is Martin-L\"of random then there exists $k$ such that $T^k(\omega)\notin G_N$. Hence $\limsup f_n(T^k(\omega))\leq r$, and $\limsup f_n(\omega)=\limsup f_n(T^k(\omega))\le r$. Since $r>\integral{f}{\mu}$ can be arbitrarily close to the integral, we have that $\limsup f_n(\omega)\leq \integral{f}{\mu}$.

It remains to prove that $\liminf f_n(\omega)\geq \integral{f}{\mu}$. This is true for every lower semicontinuous $f$. Indeed, consider some lower bound for $f$ that is a basic function (a linear combination of indicators of intervals). For these basic functions the statement of the theorem is true (as we already know), and their integrals can be arbitrarily close to $\integral{f}{\mu}$. (This argument works also for the case $\integral{f}{\mu}=+\infty$.)\qed
\end{proof}

Theorem~\ref{thm:effective-birkhoff} is, to the extent of our knowledge, the strongest form of effective ergodic theorem proven so far, in the case of an ergodic transformation. In particular, it strengthens the results that appeared in~\cite{Vyugin1997,Nan08,HoyrupR2009b} for ergodic measures. We will see in the next section that it can even be extended a bit further, namely to other spaces than~$\cs$ and to ergodic maps that are only ``weakly computable'' (in a sense which we will explain below). However, whether the Birkhoff averages of an effectively open set converge at all Martin-L\"of random points when the measure is \emph{not ergodic} remains an open problem (note that in the non-ergodic case, if the limit exists at a point, that limit is no longer the measure of the open set but depends on the particular point).
\smallskip

But let us mention first an interesting consequence of Theorem~\ref{thm:effective-birkhoff}. Recall that the randomness deficiency of a sequence~$\omega$ is defined as
$$
d_\mu(\omega)=\sup_n \{-\log \mu[\omega_0\ldots\omega_{n-1}]-K(\omega_0\ldots\omega_{n-1})\}
$$
where $K(w)$ is the (prefix) Kolmogorov complexity of $w$.

The following was proven by G\'acs~\cite{Gacs1980}: a sequence $\omega$ is Martin-L\"of random with respect to $\mu$ if and only if $d_\mu(\omega)$ is finite. Moreover, $t_\mu:=2^{d_\mu}$ is a universal randomness test in the sense that it is lower semicomputable, $\mu$-integrable, and for every lower semicomputable $\mu$-integrable $f:\cs\to[0,+\infty]$ there exists $c$ such that $f\leq ct_\mu$.

For a computable $\mu$-preserving mapping $T$ it is already known that if $\omega$ is Martin-L\"of random, then so are $T(\omega)$, $T^2(\omega)$, etc. Theorem \ref{thm:effective-birkhoff} applied to $t_\mu$ yields a stronger result for the case of ergodic $T$: not only the values $t_\mu(\omega)$, $t_\mu(T(\omega))$, $t_\mu(T^2(\omega))$, etc. are finite, but also their average is bounded. In this sense, the iterates of a random point are ``random in the average''. It is still an open problem whether this still holds in the non-ergodic case.

\subsection{A final generalization: computable probability spaces\\ and layerwise computable functions}\label{subsec:cps}
We now briefly present two ``orthogonal'' ways in which the previous results can be extended to other contexts. On the one hand, the algorithmic theory of randomness has been extended from the Cantor space to any computable metric space, where the computability of probability measures is now well understood. All the results presented above extend to such spaces. On the other hand, on the Cantor space as well as any computable metric space, the computability assumption on the mapping $T$ can be weakened into layerwise computability introduced in \cite{HoyupR2009c}. Intuitively, this weakening corresponds in analysis to replacing continuity with measurability.

The first generalization can be carried out in two ways: the proof on the Cantor space can generally be adapted to any computable probability space, or the isomorphism between such spaces (see \cite{HoyrupR2009}) can be used to transfer the result without proving it again. The second generalization is also rather direct: replacing computability notions with their ``layerwise'' counterparts generally leaves the proofs correct. Caution is sometimes needed and appropriate lemmas then have to be used (especially regarding composition of functions).

We now give a brief overview of the aforementioned concepts. More details can be found in \cite{Gac05,HoyrupR2009,HoyrupR2009b,BienvenuGHRS2011}.

The algorithmic theory of randomness has been extended from the Cantor space to any computable metric space, i.e. any separable metric space with a distinguished dense countable set on which the metric is computable. A computable probability space is such a space $X$, endowed with a computable Borel probability measure $\mu$. A universal Martin-L\"of test always exist on such spaces, and induces a canonical decomposition of the set of Martin-L\"of random points $\mathcal{R}^\mu=\bigcup_n \mathcal{R}^\mu_n$ with $\mathcal{R}^\mu_n\subseteq \mathcal{R}^\mu_{n+1}$ and $\mu(\mathcal{R}^\mu_n)>1-2^{-n}$ (namely, $\mathcal{R}^\mu_n$ is the complement in~$X$ of the $n$-th level of a universal $\mu$-Martin-L\"of test). Using this decomposition, one can weaken many computability notions, starting with the notion of a computable function: we say that a function $f:X\to Y$ (where $Y$ is a computable metric space) is \emph{$\mu$-layerwise computable} if it is computable on each $\mathcal{R}^\mu_n$ (uniformly in $n$)\footnote{When $X=Y=\cs$, it means that there is a Turing machine that on input $n$ and oracle $x\in\mathcal{R}^\mu_n$ progressively writes $f(x)$ on the output tape. The machine does not need to behave well when $x\notin \mathcal{R}^\mu_n$.}. Such a function may be discontinuous, but is still continuous on each $\mathcal{R}^\mu_n$, which is a totally disconnected set. It turns out that this notion admits a characterization in terms of effective measure theory.

Observe that $\mu$-layerwise computability of real-valued functions is closed under basic operations such as sum, product, multiplication by a computable real number, and absolute value. Composition does not automatically preserve layerwise computability without an assumption on the preservation of the measure. If $f:X\to[-\infty,+\infty]$ and $T:X\to X$ are $\mu$-layerwise computable and $T$ preserves $\mu$, then $f\circ T$ is $\mu$-layerwise computable. If, moreover, $f$ is bounded, then $\int\!{f}\,\mathrm{d}{\mu}$ is computable, uniformly in~$f$ and a bound on~$f$. In particular, $\Vert f\Vert_1$ and $\Vert f\Vert_2$ are computable.

The main reason for which layerwise computability fits well with Martin-L\"of randomness is that Martin-L\"of random points pass a class of tests that is wider than the usual Martin-L\"of tests: the tests that, on each $\mathcal{R}^\mu_k$, ``look like'' Martin-L\"of tests.

\begin{lem}\label{lem:lay-test}
Let $A_n\subseteq X$ be such that there exist uniformly effective open sets $U_{n,k}$ such that $A_n\cap\mathcal{R}^\mu_k=U_{n,k}\cap \mathcal{R}^\mu_k$. If $\mu(A_n)<2^{-n}$ for all $n$, then every $\mu$-random point is outside $\bigcap_n A_n$. Moreover there is $c$ such that $\mathcal{R}^\mu_n\cap A_{n+c}=\varnothing$ for all $n$.
\end{lem}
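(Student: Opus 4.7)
My plan is to turn the ``layerwise'' sequence $(A_n)$ into a genuine Martin-L\"of test by merging each $U_{n,k}$ with the complement of $\mathcal{R}^\mu_k$, and then to quote universality of the canonical Martin-L\"of test. The key preliminary observation is that, by the very definition of $\mathcal{R}^\mu_k$ as the complement of the $k$-th level of a universal test, the set $X\setminus\mathcal{R}^\mu_k$ is itself a uniformly effectively open set of measure strictly less than $2^{-k}$.

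Concretely, I would set
\[
V_n:=U_{n,n+2}\cup(X\setminus\mathcal{R}^\mu_{n+2}).
\]
The sequence $(V_n)$ is then effectively open uniformly in $n$. A one-line case split on whether $x\in\mathcal{R}^\mu_{n+2}$ shows that $A_n\subseteq V_n$: on $\mathcal{R}^\mu_{n+2}$ the set $A_n$ agrees with $U_{n,n+2}$, while off $\mathcal{R}^\mu_{n+2}$ every point lies in the second summand. For the measure, using $A_n\cap\mathcal{R}^\mu_{n+2}=U_{n,n+2}\cap\mathcal{R}^\mu_{n+2}$,
\[
\mu(U_{n,n+2})\le\mu(A_n)+\mu(X\setminus\mathcal{R}^\mu_{n+2})<2^{-n}+2^{-(n+2)},
\]
so that $\mu(V_n)<2^{-n}+2\cdot 2^{-(n+2)}=\tfrac{3}{2}\cdot 2^{-n}$. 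A trivial shift of indices thus makes $(V_n)$ into a bona fide Martin-L\"of test.

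Finally I would invoke the universal property of the canonical test: every Martin-L\"of test is dominated, up to a fixed additive shift of its index, by the sequence $X\setminus\mathcal{R}^\mu_n$. This yields a constant $c$ for which $V_{n+c}\cap\mathcal{R}^\mu_n=\varnothing$, and the inclusion $A_{n+c}\subseteq V_{n+c}$ immediately gives the ``moreover'' clause $\mathcal{R}^\mu_n\cap A_{n+c}=\varnothing$. The first assertion follows at once: any Martin-L\"of random $\omega$ lies in some $\mathcal{R}^\mu_n$, hence outside $A_{n+c}$, hence outside $\bigcap_m A_m$. I do not anticipate a real obstacle beyond the quantitative matching of $\mu(A_n)<2^{-n}$ against $\mu(X\setminus\mathcal{R}^\mu_k)<2^{-k}$, handled by the choice $k=n+2$ (any asymptotically larger choice would serve equally well).
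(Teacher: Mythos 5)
Your proposal is correct and follows essentially the same route as the paper, whose entire proof is the one-liner $V_n=U_{n,n}\cup(X\setminus\mathcal{R}^\mu_n)$ together with the observations that $A_n\subseteq V_n$ and that $(V_n)$ is (after an index shift) a Martin-L\"of test, concluded by universality. Your choice of level $k=n+2$ and the explicit measure bookkeeping are just a more careful rendering of the same argument.
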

\begin{proof}
Let $V_n=U_{n,n}\cup (X\setminus \mathcal{R}^\mu_n)$: $V_n$ is a Martin-L\"of test and $A_n\subseteq V_n$.
\end{proof}

Let us show how to adapt a part of the proof of Theorem \ref{thm:generalized-kucera} to computable probability spaces and $\mu$-layerwise computable mappings.

\begin{theorem}\label{thm:generalized-generalized-kucera}
Let $(X,\mu)$ be a computable probability space. Let $T: X \rightarrow X$ be a $\mu$-layerwise computable, measure-preserving, ergodic transformation of $X$. Let~$A$ be an effectively open subset of $X$ of measure less than~$1$. For every $\mu$-random point $x$, there exists $n$ such that $T^n(x)\notin A$.
\end{theorem}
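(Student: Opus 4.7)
My plan is to adapt the proof of Theorem~\ref{thm:generalized-kucera} to this more general setting. Rather than producing a genuine effectively null cover for $A^{*}:=\{x : T^n(x)\in A\text{ for all }n\ge 0\}$, I will produce a \emph{layerwise} Martin-L\"of test and invoke Lemma~\ref{lem:lay-test}.

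The preparatory step is to observe that, because $T$ is $\mu$-layerwise computable and preserves $\mu$, each iterate $T^i$ is also $\mu$-layerwise computable uniformly in $i$ (this is exactly the composition property stressed in the paragraph preceding Lemma~\ref{lem:lay-test}, which applies precisely because $T$ is measure-preserving). Hence for each $i$ and each layer index $k$ I can exhibit, uniformly in $(i,k)$, an effectively open set $U^{A}_{i,k}\subseteq X$ with $T^{-i}(A)\cap\mathcal{R}^\mu_k=U^{A}_{i,k}\cap\mathcal{R}^\mu_k$; analogous $U^{I}_{i,k}$ exist for basic open sets $I$ from the fixed countable basis of~$X$.

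The core argument then mirrors Theorem~\ref{thm:generalized-kucera}. Fix a rational $r\in(\mu(A),1)$. For a basic open set $I$ and a layer level $k$, I look for an $n$ such that the effectively open set $J_{I,k,n}:=I\cap\bigcap_{i\le n}U^{A}_{i,k}$ satisfies $\mu(J_{I,k,n})\le r\mu(I)+2^{-k}$. By von Neumann's mean ergodic theorem, $a_n:=\frac{1}{n+1}\sum_{i\le n}\chi_{T^{-i}(I)}\to\mu(I)$ in $L_2(\mu)$; by Cauchy--Schwarz and measure-preservation,
\[
\frac{1}{n+1}\sum_{i\le n}\mu(I\cap T^{-i}(A))=\langle\chi_A,a_n\rangle\longrightarrow\mu(A)\mu(I)<r\mu(I),
\]
so for $n$ large enough some $i\le n$ already has $\mu(I\cap T^{-i}(A))<r\mu(I)$, and a fortiori the full intersection does. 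The search for such an $n$ can be made effective uniformly in $k$ by approximating each $T^{-i}(I)\cap\mathcal{R}^\mu_k$ from inside by finite unions of basic sets of computable measure (available because $T$ is computable on $\mathcal{R}^\mu_k$ and $\mu$ is computable), which makes $\|a_n-\mu(I)\|_2$ computable to any prescribed precision. Iterating this shrinking step $m$ times — at each iteration decomposing the current effectively open set into its basic pieces and shrinking each of them — yields an effectively open set $V_{m,k}$ with $A^{*}\cap\mathcal{R}^\mu_k\subseteq V_{m,k}$ and $\mu(V_{m,k})\le r^m+2^{-k}$. Choosing $m,k$ so that $r^m+2^{-k}<2^{-n}$ defines a layerwise Martin-L\"of test $\{A_n\}:=\{V_{m(n),k(n)}\}$ covering $A^{*}$, and Lemma~\ref{lem:lay-test} delivers the claimed conclusion.

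The main obstacle I foresee is purely bookkeeping: the sets $U^{A}_{i,k}$ are only required to agree with $T^{-i}(A)$ on $\mathcal{R}^\mu_k$, so measures taken over the whole of $X$ may carry up to $2^{-k}$ of pollution per iteration, and these errors could in principle compound over the $m$ successive shrinkings. Keeping them additive by always restricting to $\mathcal{R}^\mu_k$ before estimating the measure, and then taking $k$ sufficiently large relative to $m$ at the very end, is the delicate part; once that is in place, the ergodic-theoretic and $L_2$ arguments transfer verbatim from the Cantor-space proof.
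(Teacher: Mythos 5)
Your proposal is correct and follows essentially the same route as the paper's own (sketched) proof: adapt the Cantor-space argument of Theorem~\ref{thm:generalized-kucera}, replace cylinders by basic sets of computable measure, use layerwise computability of the iterates $T^i$ (via the measure-preservation hypothesis) to make the $L_2$-norms $\Vert a_n-\mu(I)\Vert_2$ computable, and conclude with a layerwise test handled by Lemma~\ref{lem:lay-test}. Your discussion of the $2^{-k}$ pollution from the layerwise approximations $U^A_{i,k}$ is in fact more explicit than the paper's sketch, which simply asserts that one ``ends up with a test as in Lemma~\ref{lem:lay-test}.''
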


\begin{proof}[Sketch]
The proof is essentially the same as that of Theorem~\ref{thm:generalized-kucera}. The only differences are: adapting the notion of cylinder; using properties of layerwise computability; using Lemma \ref{lem:lay-test}.

\medskip
A computable probability space always admits a basis of metric balls with computable centers and radii, whose borders have null measure. These balls correspond in a sense to the cylinders of the Cantor space: for instance, their measures are computable. Let then $B=B(x,r)$ be a metric ball with computable center and radius, such that $\mu(\{y:d(x,y)=r\})=0$. Then $\mu(B)$ is computable, $\chi_B$ is $\mu$-layerwise computable and for all $n$ the function $f_n:=\frac{1}{n}\sum_{k=0}^{n-1}\chi_B\circ T^k$ is $\mu$-layerwise computable, uniformly in $n$. As a result, the $L_2$-norms of the functions $f_n-\mu(B)$ are all uniformly computable. Hence we can effectively find $n$ such that $\mu(B\cap \bigcap_{i\leq n}T^{-i}(A))<r\mu(B)$.

In the proof of Theorem~\ref{thm:generalized-kucera}, the computability of $T$ implied that the set $B\cap \bigcap_{i\leq n}T^{-i}(A)$ was effectively open. When $T$ is $\mu$-layerwise computable, the set $B\cap \bigcap_{i\leq n}T^{-i}(A)$ is effectively open on every $\mathcal{R}^\mu_k$. We end up with a test as in Lemma~\ref{lem:lay-test} enclosing $\bigcap_n T^{-n}(A)$, which implies the result.\qed
\end{proof}

In the same way, Theorems \ref{th:effective-birkhoff} and \ref{thm:effective-birkhoff} are true for computable probability spaces and for $\mu$-layerwise computable mappings $T$. In Theorem \ref{thm:effective-birkhoff}, the function $f$ can be assumed to be $\mu$-layerwise lower semicomputable.

\section{An application: the generalized van Lambalgen's theorem}\label{sec:gen-lambalgen}

The celebrated van Lambalgen theorem~\cite{vanLambalgen1987} asserts that in the probability space $\cs^2$ (pairs of binary sequences with independent uniformly distributed components) a pair $(\omega_0,\omega_1)$ is random if and only if $\omega_0$ is random and $\omega_1$ is $\omega_0$-random (random relative to the oracle $\omega_0$). This can be easily generalized to $k$-tuples: an element $(\omega_0,\omega_1,\ldots,\omega_{k-1})$ of $\cs^k$ is random if and only if $\omega_0$ is random and $\omega_i$ is $(\omega_0, \ldots, \omega_{i-1})$-random for all $i=1,2\ldots,k-1$. Can we generalize this statement to infinite sequences? Not completely: there exists an infinite sequence $(\omega_i)_{i \in \mathbb{N}}$ such that $\omega_0$ is random and $\omega_i$ is $(\omega_0, \ldots, \omega_{i-1})$-random for all $i \geq 1$; nevertheless, $(\omega_i)_{i\in\mathbb{N}}$ is non-random as an element of $\cs^\mathbb{N}$. To construct such an example, take a random sequence in $\cs^\mathbb{N}$ and then replace the first~$i$ bits of $\omega_i$ by zeros. 

Informally, in this example all $\omega_i$ are random, but their ``randomness deficiency'' increases with $i$, so the entire sequence $(\omega_i)$ is not random (in $\cs^\mathbb{N}$). K.~Miyabe~\cite{Miyabe-TA} has shown recently that one can overcome this difficulty allowing finitely many bit changes in each $\omega_i$ (number of changed bits may depend on $i$): 

\begin{theorem}[Miyabe]
Let $(\omega_i)_{i \in \mathbb{N}}$  be a sequence of elements of $\cs$ such that $\omega_0$ is random and  $\omega_i$ is $(\omega_0, \ldots, \omega_{i-1})$-random for all~$i \geq 1$. Then there exists a sequence $(\omega'_i)_{i \in \mathbb{N}}$ such that
\begin{itemize}
\item For every $i$ the sequence $\omega'_i$ is equal to $\omega_i$ except for a finite number of places.
\item The sequence $(\omega'_i)_{i \in \mathbb{N}}$ is a random element of $\cs^\mathbb{N}$. 
\end{itemize}
\end{theorem}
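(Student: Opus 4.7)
The strategy is to exploit the relative-randomness hypothesis to iteratively ``steer'' the sequence away from a single effectively open set of measure less than $1$ in $\cs^\mathbb{N}$, by applying a relativized version of Theorem~\ref{thm:finite-change} coordinate by coordinate, with the previously chosen coordinates serving as oracle.

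First I fix a universal Martin-L\"of test $\{V_n\}$ on $\cs^\mathbb{N}$, so that it suffices to construct $(\omega'_i)$ with $\omega'_i$ differing from $\omega_i$ in only finitely many places for each $i$, and $(\omega'_i) \notin V_1$; indeed a point is Martin-L\"of random as soon as it escapes one level of the universal test. The crucial structural observation is that $V_1$ is open in the product topology, so every point of $V_1$ already lies in a basic cylinder contained in $V_1$ that constrains only finitely many coordinates. Letting $\mu_k(\tau_0, \ldots, \tau_{k-1})$ denote the $\mu$-measure of the section of $V_1$ above $(\tau_0, \ldots, \tau_{k-1})$, the condition $(\tau_i) \in V_1$ therefore forces $\mu_k(\tau_0, \ldots, \tau_{k-1}) = 1$ for some $k$. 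The plan is thus to build $(\omega'_i)$ satisfying $\mu_k(\omega'_0, \ldots, \omega'_{k-1}) < 1$ for every $k$, which automatically places the constructed sequence outside $V_1$.

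The construction proceeds inductively along a sequence of strict thresholds $r_k = 1 - 2^{-(k+1)}$. Suppose $\omega'_0, \ldots, \omega'_{k-1}$ have been chosen, each a finite modification of $\omega_0, \ldots, \omega_{k-1}$, with $\mu_k(\omega'_0, \ldots, \omega'_{k-1}) \leq r_k$. I set $W_k = \{\tau \in \cs : \mu_{k+1}(\omega'_0, \ldots, \omega'_{k-1}, \tau) > r_{k+1}\}$, which is effectively open relative to the oracle $(\omega'_0, \ldots, \omega'_{k-1})$, since $\mu_{k+1}$ is uniformly lower semicomputable in its arguments (as $V_1$ is a uniformly effective union of product cylinders). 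By Fubini's theorem, $\integral{\mu_{k+1}(\omega'_0, \ldots, \omega'_{k-1}, \tau)}{\mu(\tau)} = \mu_k(\omega'_0, \ldots, \omega'_{k-1}) \leq r_k$, whence Markov's inequality gives $\mu(W_k) \leq r_k / r_{k+1} < 1$. Now $\omega_k$ is Martin-L\"of random relative to $(\omega_0, \ldots, \omega_{k-1})$ by hypothesis, and the tuple $(\omega'_0, \ldots, \omega'_{k-1})$ has the same Turing degree since it differs from $(\omega_0, \ldots, \omega_{k-1})$ in only finitely many bits; hence a relativized form of Theorem~\ref{thm:finite-change} produces $\omega'_k$, a finite modification of $\omega_k$, lying outside $W_k$. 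This preserves the invariant $\mu_{k+1}(\omega'_0, \ldots, \omega'_k) \leq r_{k+1}$, and the induction proceeds to the next stage.

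The main obstacle I anticipate is not the construction itself but making the effective bookkeeping explicit: one has to check that $\mu_k$ is uniformly lower semicomputable in its arguments, that $W_k$ is genuinely effectively open relative to $(\omega'_0, \ldots, \omega'_{k-1})$, and that Theorem~\ref{thm:finite-change} relativizes cleanly to an arbitrary tuple-coded oracle (which it does, since the short proof in Section~\ref{sec:variations-kucera} goes through verbatim when ``effectively open'' is read as ``effectively open relative to the oracle''). With these verifications in place, the inductive construction delivers $(\omega'_i)$ with $\omega'_i$ a finite modification of $\omega_i$ for every $i$, and with $\mu_k(\omega'_0, \ldots, \omega'_{k-1}) < 1$ for every $k$; by the structural observation above this sequence lies outside $V_1$, so it is Martin-L\"of random in $\cs^\mathbb{N}$.
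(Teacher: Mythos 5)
Your proof is correct and follows essentially the same route as the paper's: the paper establishes the generalization (Theorem~\ref{thm:gvl}) by escaping the first level of a universal test coordinate by coordinate, bounding the measure of each ``bad'' section via an averaging/Markov argument, applying the relativized \Kucera-type theorem to steer each $\omega_i$ out, and concluding from openness that some finite section would otherwise have measure~$1$. Your version merely specializes the steering step to the relativized Theorem~\ref{thm:finite-change} (finite bit changes) and uses thresholds $1-2^{-(k+1)}$ instead of $(k+1)/(k+2)$, which are inessential differences.
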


Informally, this result can be explained as follows: as we have seen (Theorem~\ref{thm:finite-change}), a change in finitely many places can decrease the randomness deficiency (starting from any non-random sequence, we get a sequence that is not covered by a first set of a Martin-L\"of test) and therefore can prevent ``accumulation'' of randomness deficiency. 

This informal explanation can be formalized and works not only for finite changes of bits but for any ergodic transformation. In fact, the results of this paper allow us to get a short proof of the following generalization of Miyabe's result (Miyabe's original proof used a different approach, namely martingale characterizations of randomness). We restrict ourselves to the uniform measure, but the same argument works for arbitrary computable measures.

\begin{theorem}\label{thm:gvl} Let $(\omega_i)_{i \in \mathbb{N}}$  be a sequence of elements of $\cs$ such that $\omega_0$ is random and  $\omega_i$ is $(\omega_0, \ldots, \omega_{i-1})$-random for all~$i \geq 1$. Let $T: \cs \rightarrow \cs$ be a computable bijective ergodic map. Then, there exists a sequence $(\omega'_i)_{i \in \mathbb{N}}$ such that
\begin{itemize}
\item For every~$i$, the sequence $\omega'_i$ is an element of the orbit of $\omega_i$ \textup(i.e., $\omega'_i=T^{n_i}(\omega_i)$ for some integer $n_i$\textup).
\item The sequence $(\omega'_i)_{i \in \mathbb{N}}$ is a random element of $\cs^\mathbb{N}$. 
\end{itemize}
\end{theorem}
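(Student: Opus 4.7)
The plan is to construct $(\omega'_i)$ inductively so that it avoids the first level $U_1$ of a universal Martin-L\"of test on $\cs^\mathbb{N}$, which already forces $(\omega'_i)$ to be random. Write $W=U_1$ (so $\mu(W)\le 1/2$) and fix once and for all a strictly increasing sequence $1/2 = s_{-1} < s_0 < s_1 < \ldots < s^* < 1$. For a tuple $(y_0,\ldots,y_{i-1})\in\cs^i$, let $W[y_0,\ldots,y_{i-1}]\subset\cs^{\mathbb N}$ denote the corresponding section of $W$. The aim of the induction is to produce $\omega'_i = T^{n_i}(\omega_i)$ while maintaining the invariant $\mu(W[\omega'_0,\ldots,\omega'_i])\le s_i$ at every stage.

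The inductive step proceeds as follows. Assume the invariant holds through stage $i-1$ and define
\[
A_i = \{x\in\cs : \mu(W[\omega'_0,\ldots,\omega'_{i-1},x]) > s_i\}.
\]
Since $W$ is effectively open and $\mu$ is computable, the function $x\mapsto \mu(W[\omega'_0,\ldots,\omega'_{i-1},x])$ is lower semicomputable relative to the oracle $(\omega'_0,\ldots,\omega'_{i-1})$, so $A_i$ is effectively open with that oracle; Fubini yields $\mu(A_i)\le s_{i-1}/s_i < 1$. The randomness of $\omega_i$ relative to $(\omega_0,\ldots,\omega_{i-1})$, together with the computability of $T$ and the fact that $(n_0,\ldots,n_{i-1})$ is finite information, implies that $\omega_i$ is also random relative to $(\omega'_0,\ldots,\omega'_{i-1})$. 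The relativized form of Theorem~\ref{thm:generalized-kucera}, applied to $T$, $A_i$, and $\omega_i$, then produces some $n_i\ge 0$ with $T^{n_i}(\omega_i)\notin A_i$; set $\omega'_i := T^{n_i}(\omega_i)$, which preserves the invariant.

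To conclude, I claim $(\omega'_i)\notin W$. If $(\omega'_i)\in W$, then by openness of $W$ in $\cs^{\mathbb N}$ the point $(\omega'_i)$ lies inside some basic cylinder contained in $W$ depending on only finitely many coordinates, which would force $\mu(W[\omega'_0,\ldots,\omega'_i])=1$ for $i$ large enough --- contradicting the uniform bound $s^*<1$. Thus $(\omega'_i)\notin U_1$, so $(\omega'_i)$ is Martin-L\"of random in $\cs^{\mathbb N}$.

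The hard part is the relativization check: verifying that $\omega_i$ remains random relative to the modified oracle $(\omega'_0,\ldots,\omega'_{i-1})$ rather than the original $(\omega_0,\ldots,\omega_{i-1})$, so that the generalized \Kucera\ theorem of Section~\ref{ergodic-generalization-kucera} can be applied at each step. This reduces to the familiar facts that adding finite information and applying a computable function preserve Martin-L\"of randomness. A secondary, minor subtlety is that the $n_i$ delivered by generalized \Kucera\ are only guaranteed to exist non-constructively; since the theorem asserts pure existence, this is harmless.
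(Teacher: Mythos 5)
Your proposal is correct and follows essentially the same route as the paper's own proof: avoid the first level of a universal test on $\cs^\mathbb{N}$ by inductively controlling the measure of the sections $W[\omega'_0,\ldots,\omega'_{i-1}]$ via a Markov/Fubini bound, apply the (relativized) generalized \Kucera\ theorem at each stage using the fact that the oracle $(\omega'_0,\ldots,\omega'_{i-1})$ is computable from $(\omega_0,\ldots,\omega_{i-1})$ plus finite information, and conclude by openness of $W$. The only cosmetic difference is your uniform bound $s^*<1$ on the thresholds, where the paper uses $2/3,3/4,4/5,\ldots$; both suffice since the final contradiction only needs each section to have measure strictly less than $1$.
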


\begin{proof} Let $U$ be the first level of a universal Martin-L\"of test on $\cs^\mathbb{N}$, with $\mu(U)\le 1/2$.  We will ensure that the sequence $(\omega_i')_{i\in\mathbb{N}}$ is outside $U$, and this guarantees its randomness.

Consider the set $V_0$ consisting of those $\alpha_0 \in \cs$ such that the section
\[
  U_{\alpha_0}=\left\{ (\alpha_1,\alpha_2,\ldots) \mid (\alpha_0,\alpha_1,\alpha_2,\ldots)\in U\right\}
  \]
has measure greater than $2/3$. The measure of $V_0$ is less than~$1$,  otherwise we would have $\mu(U) > 1/2$. It is easy to see that $V_0$ is an effectively open subset of $\cs$. Since $\omega_0$ is random, by Theorem~\ref{thm:generalized-kucera} there exists an integer $n_0$ such that $\omega'_0=T^{n_0}(\omega_0)$ is outside $V_0$. This $\omega'_0$ will be the first element of the sequence we are looking for.

Now we repeat the same procedure for $U_{\omega'_0}$ instead of $U$. Note that it is an open set of measure at most $2/3$, and, moreover, an effectively open set with respect to oracle $\omega'_0$. Since $\omega_0$ and $\omega'_0$ differ by a computable transformation, the set $U_{\omega'_0}$ is effectively open with oracle $\omega_0$.
We repeat the same argument (where $1/2$ and $2/3$ are replaced by $2/3$ and $3/4$ respectively) and conclude that there exists an integer $n_1$ such that the sequence $\omega'_1=T^{n_1}(\omega_1)$ has the following property: the set
	\[
U_{\omega'_0\omega'_1}=\left\{(\alpha_2,\alpha_3,\ldots)\mid (\omega'_0,\omega'_1,\alpha_2,\alpha_3,\ldots)\in U\right\}
	\]
has measure at most $3/4$. (Note that we need to use $\omega_0$-randomness of $\omega_1$, since we apply Theorem~\ref{thm:generalized-kucera} to an $\omega_0$-effectively open set.) 

At the next step we get $n_2$ and $\omega'_2=T^{(n_2)}\omega_2$ such that
	\[
U_{\omega'_0\omega'_1\omega'_2}=\left\{(\alpha_3,\alpha_4,\ldots)\mid (\omega'_0,\omega'_1,\omega'_2,\alpha_3,\alpha_4,\ldots)\in U\right\}
	\]
has measure at most $4/5$, etc.

Is it possible that the resulting sequence $(\omega'_0,\omega'_1,\omega'_2,\ldots)$ is covered by $U$? Since $U$ is open, it would be then covered by some interval in $U$. This interval may refer only to finitely many coordinates, so for some $m$ all sequences \[(\omega'_0,\omega'_1,\ldots,\omega'_{m-1},\alpha_m, \alpha_{m+1},\ldots)\] would belong to $U$ (for every $\alpha_m, \alpha_{m+1},\ldots$). However, this is impossible because our construction ensures that the measure of the set of all $(\alpha_m,\alpha_{m+1},\ldots)$ with this property is less than~$1$.
\qed
\end{proof}

Of course, the discussion of Section~\ref{subsec:cps} shows that Theorem~\ref{thm:gvl} can be extended to any computable probability space instead of the Cantor space, and to a layerwise computable ergodic map instead of a computable one. The details are left to the reader. 

\vspace{.5cm}

\textbf{Acknowledgements.} We would like to thank two anonymous referees for their very helpful comments and suggestions.

\bibliographystyle{alpha}	
\bibliography{effective_ergodic_theorems}
		
\end{document}